\newtheorem{theorem}{Theorem}[section]
\newtheorem{lemma}[theorem]{Lemma}
\numberwithin{equation}{section}
\theoremstyle{definition}
\theoremstyle{remark}
\newtheorem{remark}[theorem]{Remark}
\newcommand{\brac}[1]{\left(#1\right)}
\newcommand{\brab}[1]{\left\{#1\right\}}
\newcommand{\bj}{{\boldsymbol{j}}}
\newcommand{\bk}{{\boldsymbol{k}}}
\newcommand{\bell}{{\boldsymbol{\ell}}}
\newcommand{\bm}{{\boldsymbol{m}}}
\newcommand{\bn}{{\boldsymbol{n}}}
\newcommand{\bs}{{\boldsymbol{s}}}
\newcommand{\bx}{{\boldsymbol{x}}}
\newcommand{\by}{{\boldsymbol{y}}}
\newcommand{\bX}{{\boldsymbol{X}}}
\newcommand{\bW}{{\boldsymbol{W}}}
\newcommand{\bone}{{\boldsymbol{1}}}
\newcommand{\rd}{{\rm d}} 
\newcommand{\Int}{{\rm Int}}
\def\ZZd{{\mathbb Z}^d}
\def\ZZ{{\mathbb Z}}
\def\RR{{\mathbb R}}
\def\RR{{\mathbb R}^d}
\def\NN{{\mathbb N}}
\def\NNd{{\NN}^d}
\def\NN{{\mathbb N}}
\def\RR{{\mathbb R}}
\def\NNd{{\mathbb N}^d}
\def\RR{{\mathbb R}^d}
\def\ZZd{{\mathbb Z}^d}
\def\Ii{{\mathcal I}}
\def\Pp{{\mathcal P}}
\def\Ss{{\mathcal S}}
\def\ZZ{{\mathbb Z}}
\def\NN{{\mathbb N}}
\def\RR{{\mathbb R}}
\def\NNd{{\mathbb N}^d}
\def\RRd{{\mathbb R}^d}
\def\supp{\operatorname{supp}}
\def\Wap{W^r_p}
\newcommand{\norm}[2]{\left\|{#1}\right\|_{#2}}
\title{\sffamily Weighted 
	approximate sampling 
	recovery and  integration based on  B-spline interpolation and quasi-interpolation} 
\author[a]{Dinh D\~ung}
\affil[a]{Information Technology Institute, Vietnam National University, Hanoi
	\protect\\
	144 Xuan Thuy, Cau Giay, Hanoi, Vietnam
	\protect\\
	Email: dinhzung@gmail.com}
\date{\today}
\begin{document}
\maketitle

\begin{abstract}
We propose novel methods  for  approximate  sampling recovery and integration of functions in the Freud-weighted Sobolev space $W^r_{p,w}(\mathbb{R})$.  The approximation error of sampling recovery is measured in the norm of  the Freud-weighted Lebesgue space $L_{q,w}(\mathbb{R})$. Namely, we construct   equidistant,   compact-supported  B-spline quasi-interpolation and interpolation sampling algorithms $Q_{\rho,m}$ and  $P_{\rho,m}$  which are asymptotically optimal in terms of the sampling $n$-widths  $\varrho_n(\boldsymbol{W}^r_{p,w}(\mathbb{R}),  L_{q,w}(\mathbb{R}))$ for every pair $p,q \in [1,\infty]$, and prove the exact convergence rate of these sampling $n$-widths, where $\boldsymbol{W}^r_{p,w}(\mathbb{R})$ denotes the unit ball in $W^r_{p,w}(\mathbb{R})$. The algorithms $Q_{\rho,m}$ and  $P_{\rho,m}$ are based on truncated scaled  B-spline quasi-interpolation and interpolation, respectively. We also prove  the asymptotical optimality and  exact convergence rate of the  equidistant quadratures generated from  $Q_{\rho,m}$ and  $P_{\rho,m}$,  for Freud-weighted numerical integration of functions in $W^r_{p,w}(\mathbb{R})$.

	\medskip
	\noindent
	{\bf Keywords and Phrases}:  Linear sampling  recovery, Sampling widths, Freud-weighted  Sobolev space; B-spline quasi-interpolation,  B-spline interpolation; Numerical integration, Quadrature, Exact convergence rate. 
	
	\medskip
	\noindent
	{\bf MSC (2020)}:    41A15; 41A25; 41A81; 65D30; 65D32.
	
\end{abstract}

\section{Introduction}
\label{Introduction}

The aim of this paper is to construct linear sampling algorithms based on equidistant,  compact-support B-spline interpolation and quasi-interpolation, for   approximate recovery of  univariate functions in the weighted  Sobolev space $W^r_{p,w}(\mathbb{R})$ of smoothness $r \in \mathbb{N}$. The approximate recovery of   functions is based on a finite number of their sampled values. The approximation error is measured in the norm of the weighted Lebesgue space $L_{q,w}(\mathbb{R})$.  Here,  $w$ is a  Freud  weight, and the parameters  $p,q \in [1,\infty]$ may take different values. The optimality of sampling algorithms is investigated  in terms of sampling $n$-widths of the unit ball $\boldsymbol{W}^r_{p,w}(\mathbb{R})$  in this space.  We are also concerned with the numerical integration and optimal quadrature based on B-spline interpolation and quasi-interpolation for functions in $W^r_{p,w}(\mathbb{R})$.

We begin with definitions of weighted  function spaces.  
Let \begin{equation} \nonumber
	w(\bx):= w_{\lambda,a,b}(\bx) := \bigotimes_{i=1}^d w(x_i), \ \ \bx \in \RRd,
\end{equation}
be the tensor product of $d$ copies of a  univariate Freud  weight of the form
\begin{equation} \label{w(x)}
	w(x):=  w_{\lambda,a,b}(x) := \exp \brac{- a|x|^\lambda + b},
	 \ \ \lambda > 1,  \ a >0, \  b \in \RR.
\end{equation} 
The most important parameter in the weight $w$ is $\lambda$. The parameter $b$ which produces only a positive constant in the weight $w$  is introduced for a certain normalization, for instance, for the standard Gaussian weight which is one of the most important weights. 	In what follows, for simplicity of presentation, 
without  loss of generality we  assume $b=0$, and
fix the weight $w$ and hence the  parameters $\lambda, a$.

Let  $1\leq q<\infty$ and $\Omega$ be a Lebesgue measurable  subset of $\RRd$. 
We denote by  $L_{q,w}(\Omega)$ the weighted Lebesgue space  of all measurable functions $f$ on $\Omega$ such that the norm
\begin{align} \label{L-Omega}
\|f\|_{L_{q,w}(\Omega)} : = 
\bigg( \int_\Omega |f(\bx)w(\bx)|^q  \rd \bx\bigg)^{1/q}
\end{align}
is finite.
For $q=\infty$, we define  the space $L_{\infty,w}(\Omega):=C_w(\Omega)$ of all continuous functions on $\Omega$ such that
the norm 
\begin{equation*}
	\norm{f}{L_{\infty,w}(\Omega)}
	:=  
	\sup_{x\in \Omega}| f(\bx)w(\bx)|
\end{equation*}
is finite.
 For $r \in \NN$ and $1 \le p \le \infty$, the weighted  isotropic Sobolev space $W^{r,\operatorname{iso}}_{p,w}(\Omega)$   is defined as the normed space of all functions $f\in L_{p,w}(\Omega)$ such that the weak  partial derivative $D^{\bk} f$ belongs to $L_{p,w}(\Omega)$ for  every $\bk \in \NNd_0$ with $k_1+\cdots +k_d \le r$. 
Here, the letters 'iso' in the suffix is to distinct the notation  for  weighted  isotropic Sobolev space from the notation for mixed-smoothness Sobolev space $W^r_{p,w}(\Omega)$ which has already been employed in the author's prior works.
 For $d=1$ this means that the  derivative $f^{(r-1)}$ is absolute continuous and $f^{(r)}  \in L_{p,w}(\Omega)$. In this case, the letters 'iso' are omitted.
 The norm of a  function $f$ in this space is defined by
\begin{align} \label{W-Omega}
	\|f\|_{W^{r,\operatorname{iso}}_{p,w}(\Omega)}: = \Bigg(\sum_{k_1+\cdots +k_d \le r} \|D^{\bk} f\|_{L_{p,w}(\Omega)}^p\Bigg)^{1/p}.
\end{align}

For the standard $d$-dimensional Gaussian measure  $\gamma$ with the density function 
$$
v_{\operatorname{g}}(\bx): = (2\pi)^{-d/2}\exp (- |\bx|_2^2/2),
$$
consider the classical  spaces $L_p(\Omega;\gamma)$ and $W^{r,\operatorname{iso}}_{p,w}(\Omega; \gamma)$ 
which are used in many theoretical and applied problems.  The norm in \eqref{L-Omega} for these spaces takes the form
$$ \nonumber
\|f\|_{L_p(\Omega; \gamma)} : = 
\bigg( \int_\Omega |f(\bx)|^p \gamma(\rd \bx)\bigg)^{1/p}
=
\bigg( \int_\Omega |f(\bx)\brac{v_{\operatorname{g}}}^{1/p}(\bx)|^p  \rd \bx\bigg)^{1/p}.
$$
Thus, the spaces $L_p(\Omega;\gamma)$ and $W^{r,\operatorname{iso}}_{p,w}(\Omega; \gamma)$ with the Gaussian measure can be seen as  the Gaussian-weighted spaces   $L_{p,w}(\Omega)$ and $W^{r,\operatorname{iso}}_{p,w}(\Omega)$ with $w:= \brac{v_{\operatorname{g}}}^{1/p}$
for  a fixed $1 \le p < \infty$.  

The spaces $L_p(\Omega;\gamma)$ and $W^{r,\operatorname{iso}}_{p,w}(\Omega; \gamma)$ with the standard Gaussian measure can be generalized for any positive measure.
Let  $\Omega \subset \RRd$ be a Lebesgue measurable set. Let $v$ be a nonzero nonnegative Lebesgue measurable  function on $\Omega$. Denote by $\mu_v$ the measure on $\Omega$ defined via the density function $v$, i.e., for every Lebesgue measurable set $A \subset \Omega$,
$$
\mu_v(A) := \int_A v(\bx) \rd \bx.
$$
For $1 \le p < \infty$, let $L_p(\Omega;\mu_v)$ be the space  with measure $\mu_v$ of all Lebesgue measurable functions $f$ on $\Omega$ such that the norm
\begin{align} \nonumber
	\|f\|_{L_p(\Omega;\mu_v)} : = 
	\bigg( \int_\Omega |f(\bx)|^p  \mu_v(\rd \bx)\bigg)^{1/p} 
	=
	\bigg( \int_\Omega |f(\bx)|^p v(\bx) \rd \bx\bigg)^{1/p} 
\end{align}
is finite.
For $r \in \NN$, the Sobolev spaces $W^{r,\operatorname{iso}}(\Omega;\mu_v)$ with measure $\mu_v$, and  the classical Sobolev space $W^{r,\operatorname{iso}}_{p,w}(\Omega)$ are  defined in the same way as  in\eqref{W-Omega}  by replacing $L_{p,w}(\Omega)$ with $L_p(\Omega; \mu)$ and $L_p(\Omega)$, respectively. 

Let us formulate a setting of  optimal linear sampling recovery problem. Let  $X$ be  a normed space  of functions on $\Omega$. Given sample points $\bx_1,\ldots,\bx_k \in \Omega$, we consider the approximate recovery  of a continuous function $f$ on $\Omega$  from their values $f(\bx_1),\ldots, f(\bx_k)$ by a linear sampling algorithm (operator) $S_k$
on $\Omega$ of the form
\begin{equation} \label{S_k}
	S_kf: = \sum_{i=1}^k  f(\bx_i) \phi_i, 
\end{equation}
where  $\phi_1,\ldots,\phi_k$ are given  functions on $\Omega$.  For convenience, we allow that some of the sample points $\bx_i$ may coincide.  The approximation error is measured by the norm 
$\|f - S_k f\|_X$.  Denote by $\Ss_n$  the family of all linear sampling algorithms $S_k$ of the form \eqref{S_k} with $k \le n$.
Let $F \subset X$ be a set of continuous functions on $\Omega$.   To study the optimality  of linear  sampling algorithms  from $\Ss_n$ for  $F$ and their convergence rates we use  the  (linear) sampling $n$-width
\begin{equation} \label{rho_n}
\varrho_n(F, X) :=\inf_{S_n \in \Ss_n} \ \sup_{f\in F} 
\|f - S_n f\|_X.
\end{equation}

For numerical integration, we are interested  in approximation of   the weighted integral 
\begin{equation} \nonumber
	\int_{\Omega} f(\bx) w(\bx) \, \rd \bx 
\end{equation}
for functions $f$ lying  in the space 
$W^{r,\operatorname{iso}}_{p,w}(\Omega)$ for $1 \le p \le \infty$.
To approximate them we use quadratures  (quadrature operators) $I_k$  of the form
\begin{equation} \label{I_kf}
	I_kf: = \sum_{i=1}^k \lambda_i f(\bx_i), 
\end{equation}
where $\bx_1,\ldots,\bx_k \in \Omega$  are  the integration nodes and $\lambda_1,\ldots,\lambda_k$ the integration weights.  For convenience, we assume that some of the integration nodes $\bx_i$ may coincide. 
Notice that every sampling algorithm $S_k \in \Ss_n$ generates in a natural way a quadrature $I_k \in \Ii_n$ by the formula
\begin{equation} \label{I_kf-generated}
	I_kf = \int_{\Omega} S_kf(\bx) w(\bx) \, \rd \bx  = \sum_{i=1}^k \lambda_i f(\bx_i) 
\end{equation}
with the integration weights 
$$
\lambda_i:= \int_{\Omega} \phi_i(\bx) w(\bx)\rd \bx.
$$ 
Let $F$ be a set of continuous functions on $\Omega$.  Denote by $\Ii_n$  the family of all quadratures $I_k$ of the form \eqref{I_kf} with $k \le n$. The optimality  of  quadratures from $\Ii_n$ for  $f \in F$  is measured by 
\begin{equation} \nonumber
	\Int_n(F) :=\inf_{I_n \in \Ii_n} \ \sup_{f\in F} 
	\bigg|\int_{\Omega} f(\bx) w(\bx) \, \rd \bx - I_nf\bigg|.
\end{equation}

In the present paper, we focus our attention mostly on the sampling recovery and numerical integration for functions on $\RRd$ in the one-dimensional case when $d =1$ and shortly consider the multidimensional case when $d >1$.  

Sampling recovery and numerical integration are ones of basic problems in approximation theory and numerical analysis. The number of papers devoted to these problems is too large to
	mention all of them.
 We refer the reader to \cite{DTU18B,NoWo08,NoWo10,Tem18B}
for detailed surveys and  bibliography.
B-spline quasi-interpolations possess good local and approximation properties (see \cite{Chui92,deBHR1993,DeLo93B}). They were used for unweighted sampling recovery and numerical integration \cite{Dung2018,Dung11a,Dung16,Tri10B} (see also \cite{DD2023-survey,DTU18B} for survey and bibliography). In these papers, the authors constructed efficient sampling algorithms and quadratures based on B-spline quasi-interpolations, for approximate recovery and numerical integration of functions in Sobolev and Besov spaces, and prove their convergence rates. The optimality was investigated in terms of  the sampling $n$-widths $\varrho_n(F,X)$ and the quantity of optimal integration  $\Int_n(F)$ over the unit ball in these spaces. There have been a large number of papers devoted to  Gaussian- or more general Freud-weighted  interpolation and sampling recovery \cite{DD2024,DK2022,GHHR2022,GHRR2024,JMN2021,Lubi82,MN2010,MV2007,OR2006,SK2024,Sza1997}, quadrature and numerical integration  \cite{DD2023,DK2022,DM2003,DILP18,GHHR2022,GHRR2024,GKS2024,IL2015,KSG2023,EG2022,MO2004}. 

The present paper is also related to Freud-weighted polynomial approximation, in particular, Freud-weighted polynomial interpolations and quadratures.  We refer the reader to the books and monographs \cite{JMN2021,Lu07B,Mha1996B} for surveys and bibliographies on this research direction. The Freud-weighted Lagrange polynomial interpolation on $\RR$ and relevant Gaussian quadrature based on the zeros of the orthonormal polynomials with respect to the weight $w^2$ is not efficient to approximate functions in $C_w(\RR)$ and their weighted integrals \cite{Sza1997}, \cite[Proposition 1]{DM2003}. To overcome such problems, there were several suggestions of  section of the truncated sequence of these zeros and the Mhaskar-Rakhmanov-Saff points $\pm a_m$  for construction 
 of polynomial interpolation \cite{MN2010,MV2007,OR2006,Sza1997} and quadrature \cite{DM2003,MO2004} for efficient approximation.   The optimality of the polynomial interpolation and quadrature considered in \cite{MN2010} and  \cite{DM2003}, has been confirmed in \cite{DD2024} and \cite{DD2023}, respectively,  for some particular cases.

In previous works on one-dimensional Gaussian- and Freud-weighted interpolation and quadrature, the authors used the zeros of the orthonormal polynomials with respect to the weight $w^2$ or a part or a modification  of them as interpolation and quadrature nodes (cf. \cite{DD2023,DD2024,DM2003,EG2022,JMN2021,KSG2023,Lubi82,MN2010,MO2004,MV2007,OR2006,SK2024,Sza1997}). 
This requires to compute with a certain accuracy  the values of these non-equidistant zeros and  of functions at these points. Moreover, the methods employed there do not give optimal sampling recovery algorithms and quadratures for example,  for functions from the Sobolev space $W^r_{p,w}(\RR)$ in the important cases when $p = 1, \infty$.  In the present paper,  we overcome these disadvantages by proposing novel methods for construction of B-spline interpolation and quasi-interpolation and quadrature for optimal weighted sampling recovery and numerical integration of smooth functions using equidistant sample and quadrature nodes which are much simpler and easier for computation, since these nodes  and the employed B-splines can be easily and explicitly constructed, and the practical B-spline computation is well-known (for detail, see Remark~\ref{remark1}). Moreover, B-splines are a powerful tool in both theoretical and applied disciplines, including approximation theory and computational mathematics. For surveys of the topic and an extensive bibliography, see the references \cite{Chui92,DTU18B,deBoor1977,deBoor1978,deBHR1993}.  

Let  $p,q \in [1,\infty]$ be any pair. We construct compact-supported  equidistant quasi-interpolation and interpolation sampling algorithms $Q_{\rho,m}$ and  $P_{\rho,m}$ (see \eqref{Q_{rho,m}:=} and \eqref{P_{rho,m}:=}, respectively,  for definition) which are asymptotically optimal in terms of  $\varrho_n(\bW^r_{p,w}(\RR),  L_{q,w}(\RR))$. These algorithms are based on  truncated scaled cardinal B-spline quasi-interpolation and relevant B-spline interpolation of even order $2\ell$, and constructed from $2(m +\ell + j_0) - 1$  sample function values at certain equidistant points, where $j_0$ is a constant nonnegative  integer associated with B-spline quasi-interpolation. We prove that $I^Q_{\rho,m}$ and  $I^P_{\rho,m}$, the equidistant quadratures   generated from  $Q_{\rho,m}$ and  $P_{\rho,m}$ by formula \eqref{I_kf-generated}, are asymptotically optimal for 
 $\Int_n\big(\bW^r_{p,w}(\RR)\big)$. We compute the exact convergence rates of 
 $\varrho_n(\bW^r_{p,w}(\RR),  L_{q,w}(\RR))$ and  $\Int_n\big(\bW^r_{p,w}(\RR)\big)$. 
 We also prove some Marcinkiewicz-  Nikol'skii- and Bernstein-type inequalities for scaled cardinal B-splines, which play a basic role in establishing the optimality of the algorithms $Q_{\rho,m}$ and $P_{\rho,m}$. In particular, these results are true for the Gaussian-weighted spaces $L_p(\RR;\gamma)$ and $\Wap(\RR; \gamma)$. 
 
We shortly describe the main results  of our paper.  Throughout this paper, for given $p, q \in [1, \infty]$ and the parameter $\lambda > 1$ in the definition \eqref{w(x)} of the  univariate weight $w$,  we make use of  the notations
$$
r_\lambda:= r(1 - 1/\lambda);
$$ 
\begin{equation}\nonumber
	\delta_{\lambda,p,q} 
	:=
	\begin{cases}
		(1-1/\lambda)(1/p - 1/q)&  \ \ \text{if} \ \ p \le q, \\
		(1/\lambda)(1/q - 1/p)&  \ \ \text{if} \ \ p > q;
	\end{cases}	 
\end{equation}
(with the convention $1/\infty := 0$)  and 
$$
r_{\lambda,p,q}:= r_\lambda - 	\delta_{\lambda,p,q}.
$$

	Let  $1\le p,q \le \infty$ and $r_{\lambda,p,q} >0$.   For any $n \in \NN$, let $m(n)$ be the largest integer such that $2(m +\ell + j_0) - 1 \le n$.  Let the sampling operator $S_n \in \Ss_n$ be either $Q_{\rho,m(n)}$ or  $P_{\rho,m(n)}$. Then  $S_n$ is asymptotically optimal for the sampling $n$-widths $\varrho_n\big(\bW^r_{p,w}(\RR),  L_{q,w}(\RR)\big)$, and
\begin{equation}\label{rho_n}
	\varrho_n\big(\bW^r_{p,w}(\RR),  L_{q,w}(\RR)\big) 
	\asymp 
	\sup_{f\in \bW^r_{p,w}(\RR)} \big\|f - S_nf\big\|_{ L_{q,w}(\RR)} 
	\asymp
	n^{- r_{\lambda,p,q}},
\end{equation}
(for detail, see Theorem~\ref{thm:S_n}).

 Since the function spaces $L_p(\RR;\mu_w)$ and $W^r_p(\RR;\mu_w)$ with the measure $\mu_w$ coincide with $L_{p,w^{1/p}}(\RR)$  and $W^r_{p,w^{1/p}}(\RR)$ for  $1\le p < \infty$, respectively, from \eqref{rho_n} it follows that  the sampling algorithm $S_n$ is asymptotically optimal for the sampling $n$-widths 
 $\varrho_n(\bW^r_p(\RR;\mu_w), L_p(\RR;\mu_w))$ for  $1\le p < \infty$, $r_\lambda >0$, and
\begin{equation}\nonumber
\varrho_n(\bW^r_p(\RR;\mu_w), L_p(\RR;\mu_w))
	\asymp 
	\sup_{f\in \bW^r_p(\RR;\mu_w)} \big\|f - S_nf\big\|_{ L_p(\RR;\mu_w)} 
	\asymp
	n^{- r_\lambda};
\end{equation}
and, in particular, $S_n$ is asymptotically optimal for Gaussian-weighted sampling recovery in terms of the sampling $n$-widths 
$\varrho_n(\bW^r_p(\RR;\gamma), L_p(\RR;\gamma))$ for $r >0$, and
\begin{equation}\nonumber
	\varrho_n(\bW^r_p(\RR;\gamma), L_p(\RR;\gamma))
	\asymp 
	\sup_{f\in \bW^r_p(\RR;\gamma)} \big\|f - S_nf\big\|_{ L_p(\RR;\gamma)} 
	\asymp
	n^{- r/2}.
\end{equation}

	Let  $1\le p\le \infty$ and $r_\lambda - (1/\lambda)(1 - 1/p) >0$.   For any $n \in \NN$, let $m(n)$ be the largest integer such that $2(m + \ell +j_0)  - 1 \le n$.  Let the quadrature $I_n \in \Ii_n$ be either   $I^Q_{\rho,m(n)}$ or  $I^P_{\rho,m(n)}$ generated by the formula \eqref{I_kf-generated} from  $Q_{\rho,m}$ and  $P_{\rho,m}$, respectively.  Then  $I_n$ is asymptotically optimal in terms of  $\Int_n\big(\bW^r_{p,w}(\RR)\big)$, and
\begin{equation}\label{I_n}
	\Int_n\big(\bW^r_{p,w}(\RR)\big) 
	\asymp 
	\sup_{f\in \bW^r_{p,w}(\RR)} \left|\int_{\RR} f(x) w(x) \, \rd x - I_nf \right|
	\asymp
	n^{- r_\lambda + (1/\lambda)(1 - 1/p)} \ \  \forall n \in \NN,
\end{equation}
(for detail, see Theorem~\ref{thm:I_n}).

Analogously, \eqref{I_n} yields that for the function spaces $L_p(\RR;\mu_w)$ and $W^r_p(\RR;\mu_w)$ with the measure $\mu_w$,  the quadrature $I_n$ is asymptotically optimal in terms of
$\Int_n(\bW^r_1(\RR;\mu_w))$ and of 
$\Int_n(\bW^r_1(\RR;\gamma))$ for $r > 0$. Moreover, 
\begin{equation}\nonumber
\Int_n(\bW^r_1(\RR;\mu_w))
	\asymp 
	\sup_{f\in \bW^r_1(\RR;\mu_w)} \left|\int_{\RR} f(x)  \, \rd \mu_w(x) - I_nf \right|
	\asymp
	n^{- r_\lambda},
\end{equation}
and, in particular, 
\begin{equation}\nonumber
	\Int_n(\bW^r_1(\RR;\gamma))
	\asymp 
	\sup_{f\in \bW^r_1(\RR;\gamma)} \left|\int_{\RR} f(x)  \, \rd \gamma(x) - I_nf \right|
	\asymp
	n^{- r/2}.
\end{equation}

Recently, a sequence of works by the author of this paper and his collaborator on weighted sampling recovery and numerical integration over $\RR$ and $\RRd$
has appeared and bears directly on the themes of the present study. Here, we offer comments on the results of those papers, with a focus on the one-dimensional case 
$\RR$, and contrast them with the main findings of the present work.

In the paper \cite{DK2022}, we established the exact convergence rate 
of  $\varrho_n((\bW^r_p(\RR;\gamma), L_q(\RR;\gamma))$  for $1\le q < p \le\infty $ and $r \ge 2$, and the exact convergence rate of $\Int_n(\bW^r_p(\RR;\gamma))$, respectively,  for 
$1<  p <\infty $ and $r \ge 1$.  The exact convergence rates are achieved by  sampling and quadrature algorithms that assemble asymptotically optimal sampling  and quadrature algorithms for the related Sobolev spaces on the unit interval transferred to the  integer-shifted interval.
	In the recent paper \cite{DD2025a}, we have extended these results to a measure $\mu_w$ of density function $w$  as in \eqref{w(x)} with arbitrary $\lambda >0$.

In the  work \cite{DD2023}, we proved  the exact convergence rate of  $\Int_n(\bW^r_{1,w}(\RR))$. In the work \cite{DD2024}, we proved  the exact convergence rate of $\varrho_n\big(\bW^r_{p,w}(\RR),  L_{q,w}(\RR)\big)$ for $1 <p< \infty$ and $1\le q \le \infty$. 
  The exact convergence rates are achieved by  generalized methods  of  truncated Lagrange interpolation and Gaussian quadratures from \cite{MN2010} and \cite{DM2003}, respectively.

In \cite{DD2025b}, we established  in a non-constructive manner, the exact convergence rates of $\varrho_n\big(\bW^r_p(\RR; \mu_w), L_q(\RR; \mu_w)\big)$ for $1 \le q \le 2 < p \le \infty$ and  of $\varrho_n(\bW^r_2(\RR;\mu_w),  L_q(\RR;\mu_w))$  for $1 \le q \le 2$. The argument for the first result hinges on the exact convergence rates  of the Kolmogorov $n$-widths 
$d_n\big(\bW^r_p(\RR; \mu_w), L_q(\RR; \mu_w)\big)$ and a recent result on sampling $n$-widths in  \cite[Corollary~4]{DKU22}.  A key role playing in the proof of the second result are a  RKHS structure of the space $W^r_2(\RR;\mu_w)$, which is derived from some old  results \cite{Bon1984,BP1984,Freud1976} on properties of the relevant orthonormal polynomials, and the recent finding \cite[Corollary 2]{DKU22} on sampling $n$-widths.

Notice that in the papers referenced above,  two distinct settings of optimal weighted sampling recovery and numerical integration are considered: \ (i)  A weighted setting via the quantities $\varrho_n(\boldsymbol{W}^r_{p,w}(\mathbb{R}), L_{q,w}(\RR))$ and $\Int_n(\boldsymbol{W}^r_{p,w}(\mathbb{R}))$, and \ (ii) a measure-based setting via the quantities 
$\varrho_n\big(\boldsymbol{W}^r_{p}(\mathbb{R}; \mu_w), L_{q}(\mathbb{R}; \mu_w)\big)$ 
$\Int_n(\boldsymbol{W}^r_{p}(\mathbb{R}; \mu_w))$.
Setting (i) comes from the classical theory of weighted approximation (for knowledge and bibliography see, e.g., \cite{Mha1996B}, \cite{Lu07B},  \cite{JMN2021}). Setting (ii) is related to many theoretical and applied topics,  especially  to Gaussian measure $\gamma$ and other probability measures $\mu_w$. Our paper concentrates on setting (i). 
The results  for setting (ii) in the particular case $1 \le p=q <\infty$ follow as consequences from the results established in setting (i).  A careful examination of the cited works shows that, in general, settings (i) and (ii) yield substantially different approximation results, except the case $1 \le p=q <\infty$ for sampling recovery, and the case $p=1$ for numerical integration, when they are coincide, up to a re-notation.

Finally, we emphasize that the approaches developed in the cited papers are distinct from, and not reducible to, the novel methods employed in this work. Our methods are based on equidistant nodes combined with B-spline interpolation and quasi-interpolation. This constitutes the first fundamental contribution of our paper.
		As noted above, another significant contribution of this paper is that our results establish the  convergence rates for two fundamental problems in weighted spaces: 
	optimal sampling recovery in $L_{q,w}(\RR)$ and optimal quadrature of  functions from $\bW^r_{p,w}(\RR)$. These results   hold for all the pair $p,q \in [1,\infty]$, and, importantly, include the cases  $p = 1, \infty$ which were not treated  in prior works.	

It turns out that all the results of the one-dimensional case ($d = 1$)
	can be generalized to the multidimensional case ($d > 1$).  
It is interesting to generalize and extend these results to multivariate functions having a mixed smoothness. This problem will be devoted in an upcoming paper.

The paper is organized as follows. In Section \ref{B-spline sampling recovery}, we construct  truncated compact-supported B-spline  quasi-interpolation and interpolation, respectively,  algorithms and prove the error estimate of the approximation by them. Section \ref{Optimality of sampling algorithms} is devoted to the problem of optimality of sampling algorithms in terms of sampling $n$-widths.
In Subsection~\ref{Weighted B-spline inequalities}, we prove some Marcinkiewicz-  Nikol'skii- and Bernstein-type inequalities for scaled cardinal B-splines on $\RR$, which will be used for establishing the optimality of the B-spline quasi-interpolation and interpolation algorithms in the next subsection.
In Subsection \ref{Optimality}, we prove the optimality of B-spline  quasi-interpolation and interpolation algorithms in terms of the sampling $n$-widths $\varrho_n(\bW^r_{p,w}(\RR),  L_{q,w}(\RR))$, and compute the exact convergence rate of these sampling $n$-widths. 
In Section \ref{Numerical integration}, we prove that  the equidistant quadratures   generated from  the truncated B-spline  quasi-interpolation and interpolation algorithms, are asymptotically optimal in terms 
of $\Int_n\big(\bW^r_{p,w}(\RR)\big)$, and compute the exact convergence rate of 
$\Int_n\big(\bW^r_{p,w}(\RR)\big)$.
In Section \ref{High dimensional generalization}, we formulate a generalization of all the results in the previous sections to  the multidimensional case when $d > 1$.

\medskip
\noindent
{\bf Notation.} 
Denote   $\bx=:\brac{x_1,...,x_d}$ for $\bx \in \RRd$. 
For $\bx, \by \in \RRd$, the inequality $\bx \le \by$ ($\bx < \by$) means $x_i \le y_i$ ($x_i < y_i$) for every $i=1,...,d$.
We use letters $C$  and $K$ to denote general 
positive constants which may take different values. For the quantities $A_n(f,\bk)$ and $B_n(f,\bk)$ depending on 
$n \in \NN$, $f \in W$, $\bk \in J \subset \ZZd$,  
we write  $A_n(f,\bk) \ll B_n(f,\bk)$ \  $ \forall f \in W$, \ $\forall \bk \in J$ ($n \in \NN$ is specially dropped),  
if there exists some constant $C >0$ independent of $n,f,\bk$  such that 
$A_n(f,\bk) \le CB_n(f,\bk)$ for all $n \in \NN$,  $f \in W$, $\bk \in \ZZd$ (the notation $A_n(f,\bk) \gg B_n(f,\bk)$ has the  opposite meaning), and  
$A_n(f,\bk) \asymp B_n(f,\bk)$ if $S_n(f,\bk) \ll B_n(f,\bk)$
and $B_n(f,\bk) \ll S_n(f,\bk)$.  Denote by $|G|$ the cardinality of the set $G$. 
For a Banach space $X$, denote by the boldface $\bX$ the unit ball in $X$.

\section{B-spline sampling recovery}
\label{B-spline sampling recovery}

In this section, we construct  truncated equidistant,  compact-supported B-spline  quasi-interpolation  and interpolation algorithms and prove bounds of the error of the approximation by them.

	\subsection{B-spline quasi-interpolation}
\label{B-spline quasi-interpolation}

Recall that through this paper, for the weight $w$ defined as in \eqref{w(x)},  the parameters $\lambda > 1$ and $a >0$  are fixed, and $b=0$.  For $m \in \NN$, let $a_m$  be the Mhaskar-Rakhmanov-Saff number defined by
\begin{equation}\nonumber
	a_m :=\nu_\lambda m^{1/\lambda}, \ \ 
\nu_\lambda:= 	\brac{2^{\lambda - 1} \Gamma(\lambda)^{-1} \Gamma(\lambda/2)^2}^{1/\lambda},
\end{equation}
where $\Gamma$ is the gamma function. 
The number $a_m$ is relevant to convergence rates of weighted polynomial approximation (see, e.g., \cite{Mha1996B,Lu07B}).
We will need the following auxiliary result. 

\begin{lemma} \label{lemma:|f|_{L_{q,w}(RR setminus [-rho a_m, rho a_m])}}
	Let $1 \le p,q \le \infty$ and $0 < \rho <1$.	Then
	\begin{equation}\nonumber
		\|f\|_{L_{q,w}(\RR \setminus [-\rho a_m, \rho a_m])} 
		\le
		C  m^{- r_{\lambda,p,q}} \|f \|_{W^r_{p,w}(\RR)} \ \ \forall f \in W^r_{p,w}(\RR),   \ \forall m \in \NN, 
	\end{equation}
	where $C$ is a positive constant  independent of $m$ and $f$.
\end{lemma}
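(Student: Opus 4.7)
The plan is to establish the tail bound via a two-step polynomial argument: approximate $f$ in the $L_{q,w}$-norm by a polynomial $P_n$ of suitably chosen degree $n \asymp m$, and then use the Mhaskar--Rakhmanov--Saff infinite-finite range principle to show that $P_n w$ is exponentially small outside $[-\rho a_m, \rho a_m]$.

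First, I would invoke a weighted Jackson inequality in mixed norms for Freud weights: for each $n \in \NN$ there exists a polynomial $P_n$ of degree $\leq n$ such that
\[
\|f - P_n\|_{L_{q,w}(\RR)} \leq C\, n^{-r_{\lambda,p,q}} \|f\|_{W^r_{p,w}(\RR)}.
\]
Here the base rate $n^{-r_\lambda}$ (the case $p=q$) comes from the natural approximation scale $a_n/n \asymp n^{-(1-1/\lambda)}$ for polynomials with Freud weight, while the Nikol'skii-type correction $n^{\delta_{\lambda,p,q}}$ reflects the discrepancy between the $L_{p,w}$ and $L_{q,w}$ norms for polynomials of degree $\leq n$.

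Next, I would fix $n = \lfloor c\,m \rfloor$ with $0 < c < \rho^\lambda$, so that $a_n < \rho a_m$ and the tail $A_m := \RR \setminus [-\rho a_m, \rho a_m]$ lies strictly outside $[-a_n, a_n]$. A standard consequence of the Mhaskar inequality then gives, for every polynomial $P$ of degree $\leq n$,
\[
\|Pw\|_{L_q(A_m)} \leq C\, e^{-\kappa m}\, \|Pw\|_{L_q(\RR)}
\]
with some $\kappa = \kappa(c, \rho) > 0$. The triangle inequality now yields
\[
\|f\|_{L_{q,w}(A_m)} \leq \|f - P_n\|_{L_{q,w}(\RR)} + \|P_n\|_{L_{q,w}(A_m)} \leq C\, m^{-r_{\lambda,p,q}}\, \|f\|_{W^r_{p,w}} + C\, e^{-\kappa m}\, \|P_n\|_{L_{q,w}(\RR)},
\]
and since $\|P_n\|_{L_{q,w}(\RR)} \leq \|f\|_{L_{q,w}(\RR)} + \|f - P_n\|_{L_{q,w}(\RR)}$ is controlled by a constant times $\|f\|_{W^r_{p,w}}$ through the weighted Sobolev embedding, the exponential term is absorbed into the polynomial rate.

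The main obstacle is producing the $L_{q,w}$ Jackson inequality with the sharp exponent $r_{\lambda,p,q}$ in the entire range $1 \leq p, q \leq \infty$: for $p = q$ this is the classical weighted Jackson estimate, while the mixed case is obtained by combining a best $L_{p,w}$-polynomial with the weighted Nikol'skii inequality for polynomials, so this step leans on the theory of weighted polynomial approximation for Freud weights. A more self-contained alternative is to use Taylor's formula with integral remainder on the half-tail $[\rho a_m, \infty)$ and bound the remainder via Young's convolution inequality with the effective kernel $(x-t)^{r-1} e^{-c_m(x-t)}$, where $c_m := a\lambda(\rho a_m)^{\lambda-1} \asymp m^{(\lambda-1)/\lambda}$ comes from the pointwise bound $w(x)/w(t) \leq e^{-c_m(x-t)}$ for $\rho a_m \leq t \leq x$; one readily computes $\|K\|_{L_{s_0}} \asymp m^{-r_{\lambda,p,q}}$ for the Young exponent corresponding to $p \leq q$, while the case $p > q$ needs the $L_p$-bound on the tail combined with Hölder on the bounded interval $[\rho a_m, a_m]$, but in either route the Taylor boundary terms require delicate handling, which is what makes the polynomial-approximation path cleaner.
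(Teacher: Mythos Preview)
Your proposal is correct and follows essentially the same route as the paper: the paper quotes a ready-made tail estimate from \cite{MV2007},
\[
\|f\|_{L_{q,w}(\RR \setminus [-\rho a_m, \rho a_m])}
\le C\big(E_{M(m)}(f)_{q,w} + e^{-Km}\|f\|_{L_{q,w}(\RR)}\big),
\qquad M(m)=\Big\lfloor \tfrac{\rho}{\rho+1}m\Big\rfloor,
\]
which is precisely your triangle-inequality-plus-Mhaskar step in one line, and then applies the mixed-norm Jackson bound $E_m(f)_{q,w}\le Cm^{-r_{\lambda,p,q}}\|f\|_{W^r_{p,w}(\RR)}$ from \cite{DD2024a}, which is your first ingredient. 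Your alternative Taylor-remainder route is not used in the paper.
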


\begin{proof}
Denote by $\Pp_m$ the space of polynomials of degree at most $m$. For $f \in L_{p,w}(\RR)$, we define 	
\begin{equation}\nonumber
	E_m(f)_{p,w}:= \inf_{\varphi \in \Pp_m}	\|f - \varphi\|_{L_{p,w}(\RR)} 
\end{equation}
as the quantity of best weighted approximation of $f$ by polynomials of degree at most $m$.
For the following inequality  see \cite[(3.4)]{MV2007}.
With
$
	M(m)
	:= 	\left\lfloor  \frac{\rho}{\rho + 1} m\right\rfloor,
$
we have
\begin{equation} \nonumber
	\|f\|_{L_{q,w}(\RR \setminus [-\rho a_m, \rho a_m])} 
	\le
	C \brac{E_{M(m)}(f)_{q,w} + e^{-Km}\|f \|_{L_{q,w}(\RR)}} \ \ \forall f \in L_{q,w}(\RR), \ \forall m \in \NN,  
\end{equation}
where $C$ and $K$ are positive constants independent of $m$ and $f$.
There holds the inequality \cite[Theorem 2.3]{DD2024a}
\begin{equation}\nonumber
	E_m(f)_{q,w} 
	\le 
	C m^{-r_{\lambda,p,q}}  \|f \|_{W^r_{p,w}(\RR)}\ \ \forall f \in W^r_{p,w}(\RR),   \  \forall m \in \NN,  
\end{equation}
	where $C$  is a positive constant independent of $m$, $\varphi$.
		
	Let 	$ f  \in W^r_{p,w}(\RR)$ and $\forall m \in \NN$.  From the last inequalities we deduce 
	\begin{equation} \nonumber
		\begin{aligned}
	\|f\|_{L_{q,w}(\RR \setminus [-\rho a_m, \rho a_m])} 
&\ll
E_{M(m)}(f)_{q,w} + e^{-Km}\|f \|_{L_{q,w}(\RR)}
\\&			
		\ll
		M(m)^{- r_{\lambda,p,q}} \|f \|_{W^r_{p,w}(\RR)}  + e^{-Km}\|f \|_{L_{q,w}(\RR)}
\\&			
		\ll
		m^{- r_{\lambda,p,q}} \|f \|_{W^r_{p,w}(\RR)}. 
				\end{aligned}
	\end{equation}
	\hfill
	\end{proof}

We  introduce B-spline quasi-interpolation operators for functions on $\RR$. For a given even positive number $2\ell$ denote by  $M_{2\ell}$ the symmetric cardinal B-spline of order $2\ell$ with support $[-\ell,\ell]$ and 
knots at the integer points $-\ell,...,-1, 0, 1,...,\ell$. It is well-known that
\begin{equation} \label{M_2ell=}
M_{2\ell}(x) =  \frac{1}{(2\ell - 1)!} \sum_{k= 0}^{2\ell}(-1)^k \binom{2\ell}{k} (x - k + \ell)_+^{2\ell - 1},
\end{equation}
where $x_+:= \max(0,x)$ for $x \in \RR$ (see, e.g.,  \cite[(4.1.12)]{Chui92}).
Through this paper, we fix the even number $2\ell$ and use the abbreviation $M:= M_{2\ell}$.

Let $\Lambda = \{\lambda(j)\}_{|j| \le j_0}$ be a given finite even sequence, i.e., 
$\lambda(-j) = \lambda(j)$ for some $j_0\ge \ell - 1$. 
We define the linear operator $Q$ for functions $f$ on $\RR$ by  
\begin{equation} \label{def:Q}
	Qf(x):= \ \sum_{s \in \ZZ} \sum_{|j| \le j_0} \lambda (j) f(s-j)M(x-s).
\end{equation} 
The operator $Q$ is local and bounded in $C(\RR)$  (see \cite[p. 100--109]{Chui92}).
An operator $Q$ of the form \eqref{def:Q} is called a 
quasi-interpolation operator  if  it reproduces 
$\Pp_{2\ell-1}$, i.e., $Qf = f$ for every $f \in \Pp_{2\ell-1}$, where $\Pp_m$ denotes the set of  polynomials of degree at most $m$.
Notice that $Qf$ can be written in the form:
\begin{equation} \label{L-representation}
	Qf(x)  \ = \ 
	\sum_{s \in \ZZ} f(s)L(x-s), \ \forall x \in \RR, 
	\end{equation}
where 
\begin{equation} \label{L}
	L(x):= \   
	\sum_{|j| \le j_0} \lambda (j) M(x - j).
\end{equation}

We present some well-known examples of B-spline quasi-interpolation operators. 
A piecewise linear interpolation operator is defined as
\begin{equation} \label{PWlinear-intepolation}
	Qf(x):= \ \sum_{s \in \ZZ} f(s) M(x-s), 
\end{equation} 
where $M$ is the   
symmetric piecewise linear B-spline with support $[-1,1]$ and 
knots at the integer points $-1, 0, 1$ ($\ell =1$).
It is related to the classical Faber-Schauder basis of the hat functions. 
Another example is the cubic quasi-interpolation operator 
\begin{equation}  \label{Q4}
	Qf(x):= \ \sum_{s \in \ZZ} \frac {1}{6} \{- f(s-1) + 8f(s) - f(s+1)\} M(x-s), 
\end{equation} 
where $M$ is the symmetric cubic B-spline with support $[-2,2]$ and 
knots at the integer points $-2, -1, 0, 1, 2$ ($\ell=2$). For more examples of B-spline quasi-interpolation, see \cite{Chui92,BISS2005}.

If $A$ is an operator in the space of functions on $\RR$, 
we define the operator $A_h$ for $h > 0$ by
	\begin{equation} \label{A_h}
A_h
:= \ 
\sigma_h \circ A \circ \sigma_{1/h}
\end{equation}
where $\sigma_hf(x) = \ f(x/h)$.
With this definition,  we have
\begin{equation} \nonumber
	Q_hf(x)  \ = \ 
	\sum_{s \in \ZZ} \sum_{|j| \le j_0} \lambda (j) f(h(s-j))M(h^{-1}x - s), \ \forall x \in \RR.
\end{equation}

Throughout of the present paper, for a fixed number $0 < \rho < 1$, we make use of the notation
	\begin{equation} \label{h_m}
h_m:= \rho a_m/m = \rho \nu_\lambda m^{1/\lambda - 1}, \ \ x_k:= kh_m   \ \  
\forall m \in \NN, \ \forall  k \in \ZZ.
\end{equation}
 We introduce the truncated equidistant,  compact-support $B$-spline quasi-interpolation operator 
 $Q_{\rho,m}$ 
  for $m \in \NN$ by
  \begin{equation}\label{def:Q_{rho,m}}
  Q_{\rho,m}f(x)
  	:=
  	\begin{cases}
  	Q_{h_m}f(x)&  \ \ \text{if} \ \ x \in [-\rho a_m,\rho a_m], \\
  	0 &  \ \ \text{if} \ \ x \notin [-\rho a_m,\rho a_m].
  	\end{cases}	 
  \end{equation}
  By the definition,
 	\begin{equation}\label{Q_{rho,m}:=}
 		Q_{\rho,m}f(x) \ = \ 
 	\sum_{|s| \le m + \ell -1}\sum_{|j| \le j_0} \lambda (j) f(x_{s-j})M(h_m^{-1}x - s) 
 	\ \ \forall x \in [-\rho a_m,\rho a_m], \ \forall m \in \NN.
 \end{equation}
The function $Q_{\rho,m}f$ is constructed  from $2(m + \ell +j_0) - 1$ values of $f$ at the points $x_k$, 
$|k| \le m + \ell + j_0 - 1$, and 
	\begin{equation}\label{suppQ_{rho,m}}
	\supp Q_{\rho,m}f
	=
	[-\rho a_m, \rho a_m].
\end{equation}

The following theorem gives an upper bound for the approximation error by B-spline quasi-interpolation operators $Q_{\rho,m}$.

\begin{theorem} \label{thm:f-Q_{rho,m}f}
	Let $1 \le p,q \le \infty$, $r \le 2\ell$ and $r_{\lambda,p,q} > 0$.	
	Let $\rho$ be any fixed positive number satisfying the condition
\begin{equation}\label{rho<max}
	\rho < \max\brac{1, \,\frac{2 \ell - 1}{\nu_\lambda^\lambda (\ell + j_0)a\lambda}}^{1/\lambda}.
\end{equation}
Then we have that
	\begin{equation}\label{f-Q_{rho,m}f,polynomial}
		\|f - Q_{\rho,m} f\|_{L_{q,w}(\RR)} 
		\ll
		 m^{- r_{\lambda,p,q}} \|f \|_{W^r_{p,w}(\RR)} \ \ \forall f \in W^r_{p,w}(\RR),   \ \forall m \in \NN.  
	\end{equation}
\end{theorem}

\begin{proof}
Fix a positive number $\rho$ satisfying \eqref{rho<max}.	Let 	$ f  \in W^r_{p,w}(\RR)$.  We have  by \eqref{suppQ_{rho,m}}
	\begin{equation}\label{f-Q}
		\|f - Q_{\rho,m} f\|_{L_{q,w}(\RR)} 
		\le  \|f - Q_{\rho,m} f\|_{L_{q,w}([-\rho a_m,\rho a_m])}  
		+ \|f \|_{L_{q,w}(\RR \setminus [-\rho a_m,\rho a_m])}. 
	\end{equation}
	For the second term in the right-hand side, we have by 
	Lemma \ref{lemma:|f|_{L_{q,w}(RR setminus [-rho a_m, rho a_m])}}
		\begin{equation}\nonumber
		\|f \|_{L_{q,w}(\RR \setminus [-\rho a_m,\rho a_m])}
		\ll
		m^{- r_{\lambda,p,q}} \|f \|_{W^r_{p,w}(\RR)}.  
	\end{equation}
	Hence to prove \eqref{f-Q_{rho,m}f,polynomial} it is sufficient to show that for the first term in the right-hand side of \eqref{f-Q}, it holds
	\begin{equation}\label{f-Q_{rho,m}f,T_{rho,m}}
		\|f - Q_{\rho,m} f\|_{L_{q,w}([-\rho a_m,\rho a_m])}
		\ll
		m^{- r_{\lambda,p,q}} \|f \|_{W^r_{p,w}(\RR)}.  
	\end{equation}
 By  \eqref{suppQ_{rho,m}} we have
		\begin{equation}\label{f-Q_{rho,m}f,x_kQ}
		\|f - Q_{\rho,m} f\|_{L_{q,w}([-\rho a_m,\rho a_m])}^q
		=
		\sum_{k=-m}^{m -1} \|f - Q_{\rho,m} f\|_{L_{q,w}([x_k,x_{k+1}])}^q.
	\end{equation}
	Let us estimate each term in the sum of the last equation. For a given $k \in \ZZ$,
let  
		\begin{equation}\label{T_rf}
T_rf(x):= \sum_{s=0}^{r-1}\frac{1}{s!} f^{(s)}(x_k)(x - x_k)^s
	\end{equation}
 be the $r$th  Taylor polynomial of  $f$ at $x_k$. Let a number  
 $k=-m,...,m -1$ be given. We  assume $x_k \ge 0$. The case when $x_k < 0$ can be treated similarly.
Then for every $ x \in  [x_k,x_{k+1}]$, 
$$
f(x) - Q_{\rho,m}f = f (x) - T_rf(x) - Q_{\rho,m}[f (x) - T_rf(x)],
$$
since $Q_{\rho,m}$ reproduces on $[x_k,x_{k+1}]$ polynomials in $\Pp_{2\ell -1}$ and $r \le 2\ell$.
Hence,
		\begin{equation}\label{f-Q_{rho,m}f,x_ka}
 \|f - Q_{\rho,m} f\|_{L_{q,w}([x_k,x_{k+1}])}
 \le
 \|f - T _rf\|_{L_{q,w}([x_k,x_{k+1}])} +
 \|Q_{\rho,m}(f -  T_rf)\|_{L_{q,w}([x_k,x_{k+1}])}. 
 \end{equation}
For the Taylor polynomial $T_rf$ and $ x \in  [x_k,x_{k+1}]$, we have the well-known formula 
(see, e.g., \cite[(5.6), page 37]{DeLo93B})
$$
f(x) - T_rf(x) = \frac{1}{(r-1)!}\int_{x_k}^x f^{(r)}(t) (x-t)^{r-1}\rd t.
$$ 
Hence,
$$
|f(x) - T_rf(x)|w(x) 
\le 
\int_{x_k}^x |f^{(r)}(t)w(t) (x-t)^{r-1}|\rd t.
$$
Applying H\"older's inequality we find for $x \in [x_k, x_{k+1}]$,
		\begin{equation}\label{f-Q_{rho,m}f,x_kb}
|f(x) - T_rf(x)|w(x)  
\le h_m^{r - 1/p} \big\|f^{(r)}\big\|_{L_{p,w}([x_k, x_{k+1}])}.
\end{equation}
Taking the norm of  $L_q([x_k, x_{k+1}])$ of the both sides in this inequality, we receive
		\begin{equation}\label{f-Q_{rho,m}f,x_kc}
	\|f - T_rf\|_{L_{q,w}([x_k,x_{k+1}])} 
\ll
 m^{-r'_{\lambda,p,q}} \big\|f^{(r)}\big\|_{L_{p,w}([x_k, x_{k+1}])} \ \ \forall k \in \ZZ,
\end{equation}
where
	\begin{equation}\label{r'}
r'_{\lambda,p,q}:= (r-1/p+1/q)(1-1/\lambda). 
\end{equation}
Let $g \in C_w(\RR)$. By  \eqref{Q_{rho,m}:=} and \eqref{M_2ell=} for  $x \in [x_k, x_{k+1}]$,
\begin{equation} \nonumber
	Q_{\rho,m}g(x)  \ = \ 
	\sum_{|s - k|\le \ell -1}\sum_{|j| \le j_0}  
	\sum_{i =0}^{2\ell} c_{i,j} h_m^{1-2\ell}g(x_{s - j}) (x - x_{s + i  - \ell})_+^{2\ell - 1}, 
\end{equation}
where 
\begin{equation} \label{c_{i,j}:=}
c_{i,j}:= \frac{1}{(2\ell - 1)!}\lambda(j)(-1)^i \binom{2\ell}{i}.
\end{equation}
 We rewrite the last equality in a more compact form as
\begin{equation} \label{Q_{rho,m}g}
	Q_{\rho,m}g(x)  \ = \ 
	\sum_{(s,i,j) \in J_k^Q} c_{i,j} 	F_{\xi,\eta}g(x) \ \
	\forall x \in [x_k, x_{k+1}], 
\end{equation}
where 
	\begin{equation}\label{J_k^Q} 
J_k^Q:= \brab{(s,i,j): |s - k|\le \ell -1; \  i = 0,1,..., 2\ell; \   |j|\le j_0},
\end{equation}
	\begin{equation}\label{eta,xi}
	\xi := s + i - \ell, \ \ \eta:= s- j,
\end{equation}
and
	\begin{equation}\nonumber
	F_{\xi,\eta}g(x):=	g(x_\eta) h_m^{1-2\ell}(x - x_\xi)_+^{2\ell - 1}.
\end{equation}

With the fixed number $\rho$ satisfying  \eqref{rho<max}, let us  show that 
	\begin{equation}\label{(x - x_s)_+}
h_m^{1 - 2\ell} (x - x_\xi)_+^{2\ell - 1}w(x)
	\ll 
w(x_\eta)  \ \
\forall x \in [x_k, x_{k+1}], \ \ (s,i,j) \in J_k^Q.
\end{equation}
If $\xi\ge k + 1$, as $(x - x_\xi)_+ = 0$ for $x \in [x_k, x_{k+1}]$, this inequality is trivial. If $\xi <  k + 1$ and $ \eta \le k$, then $w(x) \le w(x_\eta)$  and for $(s,i,j) \in J_k^Q$,
$$
(x - x_\xi)_+^{2\ell - 1} 
\le 
(x_{k+1} - x_{k - 3\ell -1})_+^{2\ell - 1} 
\ll  
h_m^{2\ell - 1}
$$
for every $x \in [x_k, x_{k+1}]$.  Hence we obtain \eqref{(x - x_s)_+}.
Consider the remaining case when $\xi  < k + 1\le \eta$.
 For the function 
 $$
 \phi(x):= (x - x_\xi)^{2\ell - 1}w(x),
 $$
 we have 
 
$$
\phi'(x)= 
 (x - x_\xi)^{2\ell - 2}w(x)
\big[(2\ell-1)  -  a \lambda x^{\lambda - 1}(x - x_\xi) \big].
$$
Since for $\lambda >1$, the function $a \lambda x^{\lambda - 1}(x - x_\xi)$ is continuous, strictly increasing on $[x_\xi,\infty )$, and ranges from $0$ to $\infty$ on this interval, there exists a unique point  $t \in (x_\xi,\infty )$ such that $\phi'(t)=0$,  $\phi'(x)>0$ for $x < t $ and $\phi'(x)<0$ for $x >t$. By definition,
$$
\phi'(x_\eta)= 
(x_\eta - x_\xi)^{2\ell - 2}w(x_\eta)
\big[(2\ell-1)  -  a \lambda x_\eta^{\lambda - 1}(x_\eta - x_\xi) \big].
$$
We have 
$$
x_\eta \le x_{k + j_0} \le (k + j_0) h_m \le (m - \ell + j_0)\rho a_m/m
\le \rho a_m,
$$ 
\begin{equation}\label{x_eta - x_xiQ}
x_\eta - x_\xi =  (\eta - \xi )h_m \le (\ell + j_0 )\rho a_m/m,
\end{equation}
 and 
$a_m = (\nu_\lambda m)^{1/\lambda}$. Hence, by using the condition \eqref{rho<max} we derive
$$
a\lambda (x_\eta - x_\xi) x_\eta^{\lambda - 1} 
\le 
(\ell + j_0 )a\lambda (\rho a_m/m)(\rho a_m)^{\lambda - 1}
= 
(\ell + j_0)a\lambda \nu_\lambda^\lambda \rho^\lambda
<
2\ell - 1,
$$
or, equivalently, $\phi'(x_\eta) >0$. This means that $x_\eta \in (x_\xi, t)$ and, therefore,  $\phi'(x)>0$  
for every $x \in [x_\xi, x_\eta]$. It follows that the function $\phi$ is increasing on the interval 
$[x_\xi, x_\eta]$. In particular, we have for every $x \in [x_k,x_{k+1}] \subset [x_\xi, x_\eta]$,
$$
(x - x_\xi) w(x) \le (x_\eta - x_\xi) w(x_\eta), 
$$
which together with \eqref{x_eta - x_xiQ} implies \eqref{(x - x_s)_+}. With $\eta,\xi$ as in \eqref{eta,xi},
we obtain by \eqref{(x - x_s)_+},
	\begin{equation}\nonumber
|F_{\xi,\eta}(f - T_rf)(x)|w(x)
	\le 
|(f - T_rf)(x_\eta)|w(x_\eta) \ \
\forall x \in [x_k, x_{k+1}], \ \ \forall (s,i,j) \in J_k^Q.
\end{equation}
By applying  \eqref{f-Q_{rho,m}f,x_kb} to the right-hand side we get 
	\begin{equation}\label{|F_{xi,eta}(x)(f - T_rf)|w(x)}
|F_{\xi,\eta}(x)(f - T_rf)|w(x)
	\le 
h_m^{r-1/p} \big\|f^{(r)}\big\|_{L_{p,w}([x_{\eta - 1}, x_\eta])}\ \
\forall x \in [x_k, x_{k+1}], \ \ \forall (s,i,j) \in J_k^Q.
\end{equation}
 Hence, similarly to \eqref{f-Q_{rho,m}f,x_kc} we derive
		\begin{equation}\nonumber
	\|F_{\xi,\eta}(f - T_rf)\|_{L_{q,w}([x_k,x_{k+1}])} 
	\ll
	m^{-r'_{\lambda,p,q}}  \big\|f^{(r)}\big\|_{L_{w,p}([x_{\eta-1}, x_\eta])},
\end{equation}
which together with \eqref{Q_{rho,m}g} implies 
\begin{equation} \label{|Q_{rho,m}(f -  T_rf)|}
	\|Q_{\rho,m}(f -  T_rf)\|_{L_{q,w}([x_k,x_{k+1}])} 	
	\ll
	m^{-r'_{\lambda,p,q}}  	\sum_{(s,i,j) \in J_k^Q} \big\|f^{(r)}\big\|_{L_{w,p}([x_{\eta-1}, x_\eta])}.
\end{equation}
From the last inequality, \eqref{f-Q_{rho,m}f,x_ka} and \eqref{f-Q_{rho,m}f,x_kc} it follows that
\begin{equation} \label{|f - Q_{rho,m}f|}
	\|f - Q_{\rho,m}(f )\|_{L_{q,w}([x_k,x_{k+1}])} 	
	\ll
	m^{-r'_{\lambda,p,q}}  	\brac{\big\|f^{(r)}\big\|_{L_{w,p}([x_k, x_{k+1}])} +
		\sum_{(s,i,j) \in J_k^Q} \big\|f^{(r)}\big\|_{L_{w,p}([x_{s-j-1}, x_{s-j}])}}.
\end{equation}
Notice that   $\ell,j_0$  and, therefore, $c_{i,j}$ and $|J_k^Q| \le 2\ell(2\ell - 1)(2j_0 + 1)$ are constants. Hence, taking account the definition of $J_k^Q$ and $-m\le k \le m -1,$
from\eqref{f-Q_{rho,m}f,x_kQ} we derive that
		\begin{equation}\nonumber
	\|f - Q_{\rho,m} f\|_{L_{q,w}([-\rho a_m,\rho a_m])}
	\ll
	m^{-r'_{\lambda,p,q}} 	\brac{\sum_{k=-m - j_0}^{m + j_0 - 1}
		\|f \|_{W^r_{p,w}([x_k,x_{k+1}]}^q}^{1/q} =: A_m.
\end{equation}
For $1\le p \le q \le \infty$, obviously,
		\begin{equation}\label{f-Q_{rho,m}f,a_m(a)}
A_m
	\le 
 m^{-r'_{\lambda,p,q}} 	
 \brac{\sum_{k=-m - j_0}^{m + j_0 - 1}
 	\|f \|_{W^2_{p,w}([x_\xi,x_\eta])}^p}^{1/p} 
	\le  m^{- r_{\lambda,p,q}}\|f \|_{W^r_{p,w}(\RR)}.	
\end{equation}
For $1\le  q < p \le \infty$, by Young's inequality,
\begin{equation}\label{f-Q_{rho,m}f,a_m(b)}
	A_m
	\ll 
 m^{-r'_{\lambda,p,q}} 	m^{1/q-1/p}
 \brac{\sum_{k=-m - j_0}^{m + j_0 - 1}
 	\|f \|_{W^2_{p,w}([x_\xi,x_\eta])}^p}^{1/p} 
	\le  m^{- r_{\lambda,p,q}}\|f \|_{W^r_{p,w}(\RR)}.	
\end{equation}
From the last three inequalities 
\eqref{f-Q_{rho,m}f,T_{rho,m}} is implied. The theorem has been proven. 
	\hfill	
\end{proof}	

\begin{remark} \label{remark1}		
It is worth emphasizing the following.
	In Theorem~\ref{thm:f-Q_{rho,m}f},  since the parameters $\lambda$, $a$, $\nu_\lambda$, $\ell$ and $j_0$ are already specified, a  value of  $\rho >0$ satisfying the condition \eqref{rho<max} can be chosen explicitly. Moreover, because the B-splines $M(h_m^{-1}x - s)$ employed in the definition \eqref{Q_{rho,m}:=} of the B-spline quasi-interpolation operators 
	$ Q_{\rho,m}$ are explicitly constructed, these operators are also  determined  constructively.		
	This remark  also holds  for  the B-spline interpolation operators 
	$ P_{\rho,m}$ in Theorem~\ref{thm:f-P_{rho,m}f}, the associated quadratures $I^Q_{\rho,m}$ and $I^P_{\rho,m}$ in Theorem~\ref{thm:I_n}, 
B-spline inequalities in  
	Theorems~\ref{thm:MarcinkiewiczInequality}--\ref{thm:BernsteinInequality} and  multidimensional  generalizations of these interpolations and quadratures in Theorems \ref{thm:f-Q_{rho,m}f,d} and \ref{thm:I_n,d}.
\end{remark}

\subsection{B-spline interpolation}
\label{B-spline interpolation}

We have seen in the previous section that the B-spline quasi-interpolation algorithms $Q_{\rho,m}$ possess good local and approximation properties for functions in the Sobolev space $W^r_{p,w}(\RR)$. 
However, they do not have   interpolation property, except in the case of piece-wise linear interpolation when $Q$ is defined as in \eqref{PWlinear-intepolation}. In this subsection, we construct equidistant,  compact-support B-spline  algorithms having the same properties as $Q_{\rho,m}$, which interpolate functions at the points $x_k$, $|k| \le m$.

We present a construction of B-spline interpolation with compact-support and local properties suggested in \cite[pp. 114--117]{Chui92}. For a given  integer $\ell >1$ we define $\kappa:= \lceil \log_2 2\ell - 1\rceil$ and 
 the operator $R$ for functions $f \in C_w(\RR)$ by 
\begin{equation} \label{def:R}
	Rf(x):= \ M(0)^{-1} \sum_{s \in \ZZ} f(s)  M(2^\kappa (x-s)).
\end{equation} 
For example, if $\ell = 2$, then 
\begin{equation} \label{R4}
	Rf(x) = \ \frac{3}{2} \sum_{s \in \ZZ} f(s)  M_4(2(x-s)).
\end{equation} 
The operator $R$ is local and bounded in $C_w(\RR)$. Moreover, it interpolates  $f$ at integer points $s  \in \ZZ$, i.e., $Rf(s) = f(s)$. 
However, $R$ does not reproduce polynomials in
$\Pp_{2\ell-1}$, and hence does not have a good approximation property.

We define the blended operator $P$ by:
\begin{equation} \nonumber
	P:= \ R + Q - RQ,
\end{equation} 
where recall, $Q$ is the B-spline quasi-interpolation operator defined as in \eqref{def:Q}.

By the definitions we get for $f \in C_w(\RR)$,
\begin{equation} \label{RQ}
	\begin{aligned}
		RQ f(x) 
		=  \sum_{s \in \ZZ} \sum_{|j| \le j_0} 
		\sum_{|i - s|\le \ell} M(0)^{-1}\lambda (j)M(i - s) f(s-j)  M(2^\kappa (x-s) - i). 
	\end{aligned}
\end{equation} 

From \eqref{def:Q}, \eqref{def:R} and \eqref{RQ}, we obtain the explicit formula for $P$
\begin{equation} \nonumber
	\begin{aligned}
		Pf(x)
		& = 
		  \sum_{s \in \ZZ} M(0)^{-1}f(s)  M(2^\kappa (x-s))
\\& \ \ \ 
        + \sum_{s \in \ZZ} \sum_{|j| \le j_0} \lambda (j) f(s-j) M(x-s)
\\& \ \ \ 		        
		-   \sum_{s \in \ZZ} \sum_{|j| \le j_0} 
		\sum_{|i - s|\le \ell} M(0)^{-1} \lambda (j)M(i - s) f(s-j)  M(2^\kappa (x-s) - i). 
	\end{aligned}
\end{equation} 
The operator $P$ is local and bounded in $C_w(\RR)$  (see \cite[p. 100--109]{Chui92}).
It reproduces 
$\Pp_{2\ell-1}$, i.e., $Pf = f$ for every $f \in \Pp_{2\ell-1}$.  Moreover, $Pf$  interpolates $f$  at the  integer points $s  \in \ZZ$. For $h>0$, the scaled operator $P_hf$  interpolates $f$ at the  points $sh$ for  $s  \in \ZZ$, i.e.,  $P_hf(sh) = f(sh)$ for  $s  \in \ZZ$.

For example, for $\ell = 2$ and  $P$  based on the cubic B-spline quasi-interpolation operator $Q$ given by \eqref{Q4} and the interpolation operator $R$ given by \eqref{R4}, we can present  $P$ as
\begin{equation} \nonumber
	Pf(x) = \ \sum_{s \in \ZZ} \sum_{|j| \le 4} \lambda_{s - j}f(j) M_4(2x- s), 
\end{equation} 
where
$
\lambda_0 := 29/72, \ \lambda_{\pm 1}:= 7/12, \ \lambda_{\pm 2}:= -1/8, \ \lambda_{\pm 3}:=-1/12, \ \lambda_{\pm 4}:=1/48.
$
 
In the next step, we use the construction of B-spline interpolation for weighted sampling recovery of functions   
$f  \in W^r_{p,w}(\RR)$. In the same manner as the definition of  $Q_{\rho,m}$ in \eqref{def:Q_{rho,m}}, we define the  truncated equidistant compact-support $B$-spline interpolation operator 
$P_{\rho,m}$  for $\forall m \in \NN$:
   \begin{equation*}
 	P_{\rho,m}f(x)
 	:=
 	\begin{cases}
 		P_{h_m}f(x)&  \ \ \text{if} \ \ x \in [-\rho a_m,\rho a_m], \\
 		0 &  \ \ \text{if} \ \ x \notin [-\rho a_m,\rho a_m],
 	\end{cases}	 
 \end{equation*}
 where recall, $h_m$ is as in \eqref{h_m}.
 By the definition, we have for every 
$m \in \NN$  and $x \in [-\rho a_m,\rho a_m]$,
 \begin{equation}\label{P_{rho,m}:=}
	\begin{aligned}
P_{\rho,m}f(x)&: = \ R_{\rho,m}f + Q_{\rho,m}f - (RQ)_{\rho,m}f
	\\ & = 
	\sum_{|s| \le m + \ell -1}M(0)^{-1}f(x_s)  M(2^\kappa h_m^{-1}x- 2^\kappa s)
	\\& \ \ \ \ \
	+ \sum_{|s| \le m + \ell -1} \sum_{|j| \le j_0} \lambda (j) f(x_{s-j}) M(h_m^{-1}x - s)
	\\& \ \ \ \ \		        
	-   \sum_{|s| \le m + \ell -1}\sum_{|j| \le j_0} 
	\sum_{|i - s|\le \ell} M(0)^{-1} \lambda (j)M(i - s) f(x_{s-j}) M(2^\kappa h_m^{-1}x- 2^\kappa s - i).
	\end{aligned}
 \end{equation}
The function $P_{\rho,m}f$ is constructed  from $2(m + \ell + j_0) - 1$ values of $f$  at the points $x_k$, $|k| \le m  + \ell + j_0 - 1$,
 	\begin{equation}\label{suppP_{rho,m}}
 	\supp P_{\rho,m}f
 	=
 	[-\rho a_m, \rho a_m].
 \end{equation}
 	$P_{\rho,m}f(x)= P_{h_m}f(x)$ for $x \in [x_{- m},x_{m}]$, and hence, $P_{\rho,m}f$ interpolates $f$ at the $2m +1$ points $x_k$ for $|k| \le m$, i.e., 
$$
P_{\rho,m}f(x_k) = f(x_k), \ \ |k| \le m.
$$

The following theorem gives an upper bound for the approximation error by B-spline interpolation operators $P_{\rho,m}$.

\begin{theorem} \label{thm:f-P_{rho,m}f}
	Let $1 \le p,q \le \infty$, $r \le 2\ell$ and $r_{\lambda,p,q} > 0$.	
	Let $\rho$ be any fixed positive number satisfying the condition
	\begin{equation}\label{rho<maxP}
		\rho < \max\brac{1, \,\frac{2 \ell - 1}
			{\nu_\lambda^\lambda (2^\kappa j_0 +  2\ell) 2^{-\kappa \lambda}a\lambda}}^{1/\lambda}.
	\end{equation}
	Then one can determine explicitly a number 	$\rho:= \rho(a,\lambda,\ell,j_0)$  with $0 < \rho < 1$, so that
	\begin{equation}\label{f-P_{rho,m}f,polynomial}
		\|f - P_{\rho,m} f\|_{L_{q,w}(\RR)} 
		\ll
		m^{- r_{\lambda,p,q}} \|f \|_{W^r_{p,w}(\RR)} \ \ \forall f \in W^r_{p,w}(\RR),   \ \forall m \in \NN.  
	\end{equation}
\end{theorem}

 The technique of the proof of this theorem is similar to that of the proof  Theorem~\ref{thm:f-Q_{rho,m}f}, but more complicate. It is given in Appendix \ref{Proof of Theorem ref{thm:f-P_{rho,m}f}}.

\begin{remark}
{\rm	
	To construct the truncated B-spline  interpolation operator $P_{\rho,m}$, it is necessary to learn the sampled values of $f$ at the $2(m + \ell +  j_0) -1$ points $x_k$ for $|k| \le m + \ell + j_0 - 1$, while $P_{\rho,m}f$ interpolates $f$ at only  the  $2m  + 1$ points $x_k$ for $|k| \le m$. Thus, these interpolation points are strictly less than the required sampled function  values,  except the single case of the  piece-wise linear interpolation when $\ell=1$ and $j_0 =0$ (cf. \eqref{PWlinear-intepolation}). For $\ell \ge 2$, this divergence can be overcome by the following modification of $P_{\rho,m}$ which reduces the sample points.

If $f$ is a continuous function on $\RR,$ let  
$f^-$ and $f^+$
be the $(2\ell - 1)$th Lagrange polynomials interpolating $f$
at the $2\ell$ points $x_{-m},..., x_{-m + 2\ell -1},$ and 
at the $2\ell$ points
$x_{m - 2\ell + 1},..., x_m,$  respectively. Put 
\begin{equation*} 
	\bar{f}(x):= \
	\begin{cases}
		f^-(x), \ & x  \in (-\infty,\rho a_m), \\
		f(x), \ & x  \in [-\rho a_m,\rho a_m] \\
		f^+(x), \ & x  \in (\rho a_m, + \infty).
	\end{cases}
\end{equation*} 	
We define the  truncated equidistant $B$-spline interpolation operator 
$
\bar{P}_{\rho,m}
$ 
for $m \ge 2\ell$ by
\begin{equation}\nonumber
	\bar{P}_{\rho,m}f: = \ P_{\rho,m}\bar{f}
\end{equation}
	In the same manner, we define the operator $\bar{Q}_{\rho,m}$. By the construction, the functions $\bar{P}_{\rho,m}f$ and $\bar{Q}_{\rho,m}f$ are constructed  from the  values of $f$  at the   $2m+ 1$ points $x_k$, $|k| \le m$,
$$
\supp \bar{P}_{\rho,m}f =\supp \bar{Q}_{\rho,m}f =[-\rho a_m,\rho a_m],
$$
and $\bar{P}_{\rho,m}f$ interpolates $f$ at the same $2m + 1$ points $x_k$ for $|k| \le m$, i.e., 
$$
\bar{P}_{\rho,m}f(x_k) = f(x_k), \ \ |k| \le m.
$$
Moreover, if $1 \le p,q \le \infty$, $r \le 2\ell$ and $r_{\lambda,p,q} > 0$, then in a way similar to the proof of Theorem~\ref{thm:f-P_{rho,m}f}, we can prove that there exists $0 < \rho < 1$ such that
	\begin{equation}\nonumber
		\|f - \bar{S}_{\rho,m} f\|_{L_{q,w}(\RR)} 
		\ll
		m^{- r_{\lambda,p,q}} \|f \|_{W^r_{p,w}(\RR)} \ \ \forall f \in W^r_{p,w}(\RR),   \ \forall m \ge 2\ell,
	\end{equation}
where $\bar{S}_{\rho,m}$	denotes either $\bar{P}_{\rho,m}$ or $\bar{Q}_{\rho,m}$.
}
\end{remark}

\section{Optimality of sampling algorithms}
\label{Optimality of sampling algorithms}

\subsection{Weighted B-spline inequalities}
\label{Weighted B-spline inequalities}
In this subsection, we prove some weighted  Marcinkiewicz-,  Nikol'skii- and Bernstein-type inequalities for scaled cardinal B-splines, which are interesting themselves and which will be used for establishing the optimality of the B-spline quasi-interpolation operator $Q_{\rho,m}$ and interpolation operator $P_{\rho,m}$ in the next subsection.

Denote by $S_{\rho,m}$, $m > \ell$,  the subspace in $C_w(\RR)$ of all B-spline $\varphi$ on $\RR$ of the form
\begin{equation}\nonumber
	\varphi(x) = \ 
\sum_{|s| \le m - \ell} b_s M_{\rho,m,s}(x), \ \forall x \in \RR, 
\end{equation}
where  $M_{\rho,m,s}(x):= 	M(h_m^{-1}x -  s)$ and recall, $h_m$ is as in \eqref{h_m}. 
In what follows, to emphasize the dependence of the coefficients  $b_s$ on $\varphi$, we will write $b_s:=b_s(\varphi)$. 
Since  $Q_{\rho,m}$ reproduce on the interval $[-\rho a_m, \rho a_m]$ polynomials from $\Pp_{2\ell - 1}$,  we can see that $Q_{\rho,m}\varphi(x) = \varphi(x)$ and, therefore, 
\begin{equation}\label{varphi=(1)}
	\varphi(x) = \ 
	\sum_{|s| \le m - \ell}\sum_{|j| \le j_0} \lambda (j) \varphi(x_{s-j})M(h_m^{-1}x - s)\  \ 
	\forall \varphi \in S_{\rho,m}, \ \forall x \in \RR.
\end{equation}
Moreover, the B-splines $(M_{\rho,m,s})_{|s| \le m-\ell}$ form a basis in $S_{\rho,m}$,  $\dim S_{\rho,m}= 2(m - \ell) + 1$ and
\begin{equation}\label{supp varphi}
	\supp \varphi
	=
	[-\rho a_m,\rho a_m]  \  \ \forall \varphi \in S_{\rho,m}.
\end{equation}
For $1 \le p \le \infty$, $n \in \NN_0$ and a sequence $(c_s)_{|s| \le n}$ we introduce the weighted norm
\begin{equation}\nonumber
	\|(c_s)\|_{p,w,n}
	:=
\brac{\sum_{|s| \le n} |w(x_s)c_s|^p}^{1/p}
\end{equation}
for $1 \le p  < \infty$ with the corresponding modification when $p = \infty$.

\begin{theorem} \label{thm:MarcinkiewiczInequality} 
	Let $1 \le p\le \infty$. Let $\rho$ be any fixed positive number satisfying the condition
\eqref{rho<max}.
Then there hold the Marcinkiewicz-type inequalities
	\begin{equation}\label{MarcinkiewiczInequality} 
		\|\varphi\|_{L_{p,w}(\RR)} 
		\asymp
			m^{(1/\lambda - 1)/p}\|(\varphi(x_s))\|_{p,w,m} 
	\asymp
	m^{(1/\lambda - 1)/p}\|(b_s(\varphi))\|_{p,w, m-\ell}
	 \ \ 	\forall \varphi \in S_{\rho,m},   \ \forall m  \ge \ell. 
	\end{equation}
\end{theorem}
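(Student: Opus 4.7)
The strategy is to reduce both equivalences to the classical (unweighted) Marcinkiewicz inequalities for cardinal B-splines on the uniform grid $\{x_s\}$, and then to pass to the weighted setting by freezing the weight on each grid interval $I_s:=[x_s,x_{s+1}]$. Since $h_m=\rho\nu_\lambda m^{1/\lambda-1}$, the prefactor $m^{(1/\lambda-1)/p}$ in \eqref{MarcinkiewiczInequality} equals $h_m^{1/p}$ up to a constant depending only on $\rho$ and $\nu_\lambda$, so the target takes the familiar scaling form
\[
\|\varphi\|_{L_{p,w}(\RR)}\asymp h_m^{1/p}\|(\varphi(x_s))\|_{p,w,m}\asymp h_m^{1/p}\|(b_s(\varphi))\|_{p,w,m-\ell}.
\]

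First I would fix $\rho=\rho(a,\lambda,\ell,j_0)\in(0,1)$ small enough that the Freud weight $w$ is essentially constant on blocks of $O(1)$ adjacent grid intervals contained in $[-\rho a_m,\rho a_m]$. By exactly the same mean-value computation used in the proof of Theorem~\ref{thm:f-Q_{rho,m}f}, for $x\in I_s$ with $|s|\le m$,
\[
\bigl|\log(w(x)/w(x_s))\bigr|\le a\lambda|x_s|^{\lambda-1}h_m\le a\lambda(\rho a_m)^{\lambda-1}h_m=a\lambda\rho^\lambda\nu_\lambda,
\]
where the last equality uses $a_m^\lambda=\nu_\lambda m$ and $h_m=\rho a_m/m$. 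The bound is independent of $m$, so choosing $\rho$ small gives $w(x)\asymp w(x_s)$ uniformly on $I_s$ and, more generally, $w(x_s)\asymp w(x_k)$ whenever $|k-s|\le C$ with $C=C(\ell,j_0)$.

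For the equivalence with coefficients, I would decompose $\|\varphi\|_{L_{p,w}(\RR)}^p=\sum_{|s|\le m-1}\int_{I_s}|\varphi(x)w(x)|^p\rd x$ (finite by \eqref{supp varphi}), freeze the weight to obtain $\asymp\sum_s w(x_s)^p\int_{I_s}|\varphi|^p\rd x$, and apply on each $I_s$ the local stability of the B-spline basis: since $\varphi|_{I_s}$ is a polynomial in $\Pp_{2\ell-1}$ whose expansion uses only $(b_k(\varphi))_{|k-s|\le\ell-1}$, one has $\int_{I_s}|\varphi|^p\rd x\asymp h_m\sum_{|k-s|\le\ell-1}|b_k(\varphi)|^p$ with constants depending only on $\ell$. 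Reindexing the resulting double sum and using $w(x_s)\asymp w(x_k)$ for $|k-s|\le\ell-1$ yields the right-hand equivalence of \eqref{MarcinkiewiczInequality}; the case $p=\infty$ is handled analogously with maxima replacing sums. For the middle equivalence I would exploit \eqref{varphi=(1)}, which expresses each sample $\varphi(x_k)$ as a bounded banded linear combination of the coefficients $(b_s(\varphi))_{|s-k|\le j_0+\ell}$, and dually recover the coefficients from the sample values by inverting the corresponding banded Toeplitz-like operator (uniformly bounded on the unweighted sequence spaces by the classical cardinal B-spline interpolation theory). After the weight-freezing step, this banded operator remains bounded above and below between the weighted sequence norms $\|\cdot\|_{p,w,\cdot}$, giving $\|(\varphi(x_s))\|_{p,w,m}\asymp\|(b_s(\varphi))\|_{p,w,m-\ell}$.

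The main obstacle will be to choose $\rho$ small enough once and for all so that the weight ratios over all the bounded-range shifts appearing in both estimates (of size up to $2\ell+j_0$) are simultaneously comparable to $1$, uniformly in $m$. This reduces to the single quantitative requirement that $\rho^\lambda$ be small enough relative to a constant depending only on $a,\lambda,\ell,j_0$, the very mechanism already used in Theorem~\ref{thm:f-Q_{rho,m}f}; once secured, all remaining steps are local and follow from well-known unweighted cardinal B-spline estimates, modulo routine edge bookkeeping near $\pm\rho a_m$ handled by $\varphi(\pm\rho a_m)=0$.
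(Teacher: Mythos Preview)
Your proposal is correct and takes a genuinely different, more modular route than the paper. The paper argues both directions by expanding $M$ via its truncated-power formula \eqref{M_2ell=}, writing $\varphi$ through the quasi-interpolation identity \eqref{varphi=(1)}, and then invoking the same monotonicity analysis of $\phi(x)=(x-x_\xi)^{2\ell-1}w(x)$ that drives Theorem~\ref{thm:f-Q_{rho,m}f}; the reverse inequalities are obtained from the unweighted local bounds in \cite{DeLo93B} together with the monotonicity $w(x_s)\le w(x)$ on $[x_{s-1},x_s]$. Your approach instead isolates a single ``weight-freezing'' lemma $w(x)\asymp w(x_s)$ on $I_s$ (and on $O(1)$ neighbouring intervals), and then reduces everything to the classical unweighted local stability $\int_{I_s}|\varphi|^p\,\rd x\asymp h_m\sum_{|k-s|\le\ell}|b_k|^p$, which holds because the restricted B-splines form a fixed basis of $\Pp_{2\ell-1}$ on a reference interval. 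This is cleaner, separates the weight issue from the B-spline issue, and immediately gives both directions of each equivalence at once; the paper's approach has the advantage of reusing verbatim the machinery already developed for Theorem~\ref{thm:f-Q_{rho,m}f}.

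Two small remarks. First, your identity $a_m^\lambda=\nu_\lambda m$ should read $a_m^\lambda=\nu_\lambda^\lambda m$; this only perturbs the constant in the freezing bound and is harmless. Second, you do not actually need to invert any Toeplitz operator for the middle equivalence: since the $M_{\rho,m,s}$ are a basis of $S_{\rho,m}$, comparing \eqref{varphi=(1)} with the definition of $b_s(\varphi)$ gives directly $b_s(\varphi)=\sum_{|j|\le j_0}\lambda(j)\varphi(x_{s-j})$, so both the map samples~$\to$~coefficients and the map coefficients~$\to$~samples (via $\varphi(x_k)=\sum_s b_s(\varphi)M(k-s)$) are explicit bounded banded operators, and weight-freezing over shifts of size at most $\ell+j_0$ finishes the job.
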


The proof of this theorem is given in Appendix \ref{Proof of Theorem ref{thm:MarcinkiewiczInequality}}.

	\begin{theorem} \label{thm:NikolskiiInequality} 
		Let $1 \le p,q\le \infty$.	Let $\rho$ be any fixed positive number satisfying the condition
			\eqref{rho<max}.
			Then   there holds the  Nikol'skii-type inequality
		\begin{equation*}
			\|\varphi\|_{L_{q,w}(\RR)} 
			\ll
			m^{\delta_{\lambda,p,q}}	\|\varphi\|_{L_{p,w}(\RR)}  \ \ \  
			\forall \varphi \in S_{\rho,m},   \ \forall m  \ge \ell. 
		\end{equation*}
	\end{theorem}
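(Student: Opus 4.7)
The plan is to reduce the $L_{q,w}$ and $L_{p,w}$ norms of $\varphi\in S_{\rho,m}$ to weighted sequence norms of the sample values $(\varphi(x_s))_{|s|\le m}$ via the Marcinkiewicz-type inequality of Theorem~\ref{thm:MarcinkiewiczInequality}, and then compare these sequence norms with the elementary finite-dimensional embedding/Hölder bound. The parameter $\rho$ is chosen to be the one supplied by Theorem~\ref{thm:MarcinkiewiczInequality} (which is the same kind of threshold $\rho=\rho(a,\lambda,\ell,j_0)\in(0,1)$ invoked in all previous results).

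First, fix $\varphi\in S_{\rho,m}$. Applying Theorem~\ref{thm:MarcinkiewiczInequality} at both exponents $p$ and $q$ gives
\begin{equation*}
\|\varphi\|_{L_{q,w}(\RR)} \asymp m^{(1/\lambda - 1)/q} \|(\varphi(x_s))\|_{q,w,m},
\qquad
\|\varphi\|_{L_{p,w}(\RR)} \asymp m^{(1/\lambda - 1)/p} \|(\varphi(x_s))\|_{p,w,m}.
\end{equation*}
Thus the task reduces to bounding the ratio $\|(\varphi(x_s))\|_{q,w,m}/\|(\varphi(x_s))\|_{p,w,m}$ of two weighted $\ell^p$/$\ell^q$ norms of the \emph{same} length-$(2m+1)$ sequence $a_s := w(x_s)\varphi(x_s)$.

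Next I would split into the two cases in the definition of $\delta_{\lambda,p,q}$. If $p\le q$, the monotonicity of $\ell^r$ norms on a fixed finite index set yields $\|(a_s)\|_{\ell^q}\le \|(a_s)\|_{\ell^p}$, hence $\|(\varphi(x_s))\|_{q,w,m}\le \|(\varphi(x_s))\|_{p,w,m}$. Combining with the two Marcinkiewicz asymptotics gives
\begin{equation*}
\|\varphi\|_{L_{q,w}(\RR)} \ll m^{(1/\lambda-1)(1/q-1/p)}\|\varphi\|_{L_{p,w}(\RR)} = m^{(1-1/\lambda)(1/p-1/q)}\|\varphi\|_{L_{p,w}(\RR)},
\end{equation*}
which is exactly the exponent $\delta_{\lambda,p,q}$ in the case $p\le q$. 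If instead $p>q$, Hölder's inequality on the finite set $\{|s|\le m\}$ (of cardinality $\asymp m$) gives
\begin{equation*}
\|(\varphi(x_s))\|_{q,w,m}\ll m^{1/q-1/p}\|(\varphi(x_s))\|_{p,w,m},
\end{equation*}
and combining with the two Marcinkiewicz equivalences produces
\begin{equation*}
\|\varphi\|_{L_{q,w}(\RR)} \ll m^{(1/\lambda-1)(1/q-1/p)+(1/q-1/p)}\|\varphi\|_{L_{p,w}(\RR)} = m^{(1/\lambda)(1/q-1/p)}\|\varphi\|_{L_{p,w}(\RR)},
\end{equation*}
again matching $\delta_{\lambda,p,q}$.

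There is essentially no obstacle here beyond bookkeeping: the hard content has already been done in Theorem~\ref{thm:MarcinkiewiczInequality}, which converts the continuous weighted problem into a discrete one where the weight is captured at the nodes $x_s$ and the only remaining gap is the finite-dimensional $\ell^p \leftrightarrow \ell^q$ comparison. The only point requiring a brief sanity check is the sign/exponent arithmetic to verify that both cases assemble into the single piecewise formula for $\delta_{\lambda,p,q}$, and that the $p=\infty$ or $q=\infty$ endpoints behave correctly under the convention $1/\infty:=0$ (the $\ell^p$ embedding on a finite set still gives $\|a\|_{\ell^\infty}\le \|a\|_{\ell^p}$ and $\|a\|_{\ell^q}\le (2m+1)^{1/q}\|a\|_{\ell^\infty}$, which is consistent).
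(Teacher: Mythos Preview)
Your proof is correct and follows essentially the same approach as the paper: apply the Marcinkiewicz-type equivalence of Theorem~\ref{thm:MarcinkiewiczInequality} at both exponents to pass to weighted sequence norms, then compare $\ell^q$ and $\ell^p$ on the finite index set $\{|s|\le m\}$ via monotonicity (case $p\le q$) or H\"older's inequality (case $p>q$), and assemble the exponents into $\delta_{\lambda,p,q}$.
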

\begin{proof}
	This theorem is a consequence of Theorem \ref{thm:MarcinkiewiczInequality}. Let us prove it for completeness.
	Indeed, let 	
	$ \varphi \in S_{\rho,m}$ and $m  \ge \ell$.  We have by Theorem \ref{thm:MarcinkiewiczInequality} 
	for  $1 \le p \le q\le \infty$,
	\begin{equation}\nonumber
		\begin{aligned}
		\|\varphi\|_{L_{q,w}(\RR)} 
		&\asymp
		m^{(1/\lambda - 1)/q}\|(\varphi(x_s))\|_{q,w,m} 
		\ll
			m^{(1/\lambda - 1)/q}\|(\varphi(x_s))\|_{p,w,m}
			\\&
				\asymp	 
		m^{(1/\lambda - 1)/q}m^{(1/\lambda - 1)/p}	\|\varphi\|_{L_{p,w}(\RR)}
		= 
		m^{\delta_{\lambda,p,q}}	\|\varphi\|_{L_{p,w}(\RR)}, 				
		\end{aligned}
	\end{equation}
and for  $1 \le q < p\le \infty$,
\begin{equation}\nonumber
	\begin{aligned}
		\|\varphi\|_{L_{q,w}(\RR)} 
		&\asymp
		m^{(1/\lambda - 1)/q}\|(\varphi(x_s))\|_{q,w,m} 
		\\&	\le
		m^{(1/\lambda - 1)/q} (2(m - \ell) +1)^{1/q - 1/p}\|(\varphi(x_s))\|_{p,w,m}
		\\&
		\asymp	 
		m^{(1/\lambda - 1)/q}m^{1/q - 1/p}m^{(1/\lambda - 1)/p}	\|\varphi\|_{L_{p,w}(\RR)}
		= 
		m^{\delta_{\lambda,p,q}}	\|\varphi\|_{L_{p,w}(\RR)}. 				
	\end{aligned}
\end{equation}
		\hfill	
\end{proof}	
\begin{theorem} \label{thm:BernsteinInequality}
	Let $1 \le p\le \infty$, $r \le 2\ell$ and $r_\lambda > 0$.	Let $\rho$ be any fixed positive number satisfying the condition
		\eqref{rho<max}.
		Then there holds the Bernstein-type inequality
	\begin{equation}\label{varphi^{(r)}}
		\|\varphi^{(r)}\|_{L_{p,w}(\RR)} 
		\ll
		m^{r_\lambda} \|\varphi\|_{L_{p,w}(\RR)} \ \ \  
		\forall \varphi \in S_{\rho,m},   \ \forall m  \ge \ell. 
	\end{equation}
\end{theorem}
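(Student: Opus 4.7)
The plan is to prove the inequality locally on each mesh interval $[x_k,x_{k+1}]$ and then assemble the pieces. The first step is the key observation that for $-m\le k\le m-1$, the weight $w$ varies on $[x_k,x_{k+1}]$ by only a bounded multiplicative factor, uniformly in $k$ and $m$. Indeed, by the mean value theorem applied to $t\mapsto t^\lambda$, for every $x\in[x_k,x_{k+1}]$,
\begin{equation*}
\big||x|^\lambda-|x_k|^\lambda\big|\;\le\;\lambda(\rho a_m)^{\lambda-1}h_m\;=\;\lambda\rho^\lambda\nu_\lambda^\lambda,
\end{equation*}
since $h_m=\rho a_m/m$ and $a_m^\lambda=\nu_\lambda^\lambda m$. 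This constant is independent of $k,m$, so $w(x)\asymp w(x_k)$ on $[x_k,x_{k+1}]$, and consequently
\begin{equation*}
\|\psi\|_{L_{p,w}([x_k,x_{k+1}])}\;\asymp\;w(x_k)\,\|\psi\|_{L_p([x_k,x_{k+1}])}
\end{equation*}
for any measurable $\psi$ (with the obvious modification when $p=\infty$).

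For the second step, observe that $\varphi\in S_{\rho,m}$ restricted to any $[x_k,x_{k+1}]$ is a polynomial of degree at most $2\ell-1$. Hence, by the classical Markov--Bernstein inequality for polynomials of fixed degree on an interval of length $h_m$,
\begin{equation*}
\|\varphi^{(r)}\|_{L_p([x_k,x_{k+1}])}\;\ll\;h_m^{-r}\,\|\varphi\|_{L_p([x_k,x_{k+1}])},
\end{equation*}
with an implicit constant depending only on $\ell$ and $r$ (and trivially so when $r=2\ell$, since then $\varphi^{(r)}=0$ on the interior of the interval). Combining with the local weight comparison of step one yields
\begin{equation*}
\|\varphi^{(r)}\|_{L_{p,w}([x_k,x_{k+1}])}\;\ll\;h_m^{-r}\,\|\varphi\|_{L_{p,w}([x_k,x_{k+1}])}.
\end{equation*}

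Finally, since $\varphi$ is supported in $[-\rho a_m,\rho a_m]=[x_{-m},x_m]$, summing the $p$-th powers over $k=-m,\dots,m-1$ when $p<\infty$, or taking the maximum when $p=\infty$, produces
\begin{equation*}
\|\varphi^{(r)}\|_{L_{p,w}(\RR)}\;\ll\;h_m^{-r}\,\|\varphi\|_{L_{p,w}(\RR)}.
\end{equation*}
Recalling $h_m=\rho\nu_\lambda\,m^{1/\lambda-1}$, we have $h_m^{-r}\asymp m^{r(1-1/\lambda)}=m^{r_\lambda}$, which is the desired Bernstein-type inequality. The only delicate point is the uniform local weight comparison in step one: it relies on keeping the arguments inside $[-\rho a_m,\rho a_m]$ so that $|x|^{\lambda-1}h_m$ stays bounded, and in fact any $\rho\in(0,1)$ works here; in particular one may reuse the $\rho=\rho(a,\lambda,\ell,j_0)$ already fixed in the preceding theorems of this section.
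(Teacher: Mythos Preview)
Your proof is correct, and it is genuinely simpler than the paper's argument. The paper proceeds by expanding $\varphi^{(r)}$ through the quasi-interpolation representation $\varphi=Q_{\rho,m}\varphi$ (so that on $[x_k,x_{k+1}]$ one gets a finite sum of terms $c_{i,j}\,\varphi(x_{s-j})\,h_m^{1-2\ell+r}(x-x_{s+i-\ell})_+^{2\ell-1-r}$), then reuses the delicate pointwise bound $h_m^{1-2\ell+r}(x-x_\xi)_+^{2\ell-1-r}w(x)\ll w(x_\eta)$ from the proof of Theorem~\ref{thm:f-Q_{rho,m}f}; this is where the specific choice of $\rho=\rho(a,\lambda,\ell,j_0)$ enters. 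After that it invokes the local estimate $|\varphi(x_\eta)w(x_\eta)|^p\ll h_m^{-1}\|\varphi\|_{L_{p,w}([x_{\eta-1},x_\eta])}^p$ and sums over the index set $J_k^Q$.

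Your route sidesteps all of that machinery: the observation that $||x|^\lambda-|x_k|^\lambda|\le\lambda\rho^\lambda\nu_\lambda^\lambda$ on $[x_k,x_{k+1}]$ gives the weight comparison directly, and then the classical Markov inequality for polynomials of fixed degree $\le 2\ell-1$ on an interval of length $h_m$ delivers the local bound in one stroke. Notably, your argument works for \emph{any} $\rho\in(0,1)$ (indeed any $\rho>0$), so the restriction on $\rho$ stated in the theorem is not actually needed for this particular inequality --- it is inherited from the other results in the section. The paper's approach has the advantage of being uniform with the techniques of Section~\ref{B-spline sampling recovery}, but your approach is more transparent and isolates exactly what is needed for the Bernstein estimate alone.
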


The proof of this theorem is given in Appendix \ref{Proof of Theorem ref{thm:BernsteinInequality}}.

\subsection{Optimality}
\label{Optimality}

In this subsection, we prove the optimality of the constructed B-spline  quasi-interpolation and interpolation algorithms in terms of the sampling $n$-widths $\varrho_n(\bW^r_{p,w}(\RR),  L_{q,w}(\RR))$, and compute the exact convergence rate of these sampling $n$-widths. 

\begin{theorem} \label{thm:S_n}
	Let  $1\le p,q \le \infty$ and $r_{\lambda,p,q} >0$.   For any $n \in \NN$, let $m(n)$ be the largest integer such that $2(m +\ell + j_0)  - 1 \le n$.  Let the sampling algorithm $S_n \in \Ss_n$ be either the B-spline quasi-interpolation operator $Q_{\rho,m(n)}$ or the B-spline interpolation operator $P_{\rho,m(n)}$. Then  $S_n$ is asymptotically optimal for the sampling $n$-widths $\varrho_n\big(\bW^r_{p,w}(\RR),  L_{q,w}(\RR)\big)$ and
		\begin{equation}\label{rho_n-d=1}
			\varrho_n\big(\bW^r_{p,w}(\RR),  L_{q,w}(\RR)\big) 
			\asymp 
				\sup_{f\in \bW^r_{p,w}(\RR)} \big\|f - S_nf\big\|_{ L_{q,w}(\RR)} 
			\asymp
			n^{- r_{\lambda,p,q}}.
		\end{equation}
\end{theorem}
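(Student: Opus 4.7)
\medskip
\noindent
\textbf{Proof plan.}
The argument splits into matching upper and lower bounds. The upper bound is immediate: since $m(n) \asymp n$, Theorem~\ref{thm:f-Q_{rho,m}f} (when $S_n = Q_{\rho,m(n)}$) or Theorem~\ref{thm:f-P_{rho,m}f} (when $S_n = P_{\rho,m(n)}$) yields
\[
\sup_{f \in \bW^r_{p,w}(\RR)} \|f - S_n f\|_{L_{q,w}(\RR)} \ll m(n)^{-r_{\lambda,p,q}} \asymp n^{-r_{\lambda,p,q}},
\]
and since $S_n \in \Ss_n$ this gives $\varrho_n \ll n^{-r_{\lambda,p,q}}$ and simultaneously shows that $S_n$ realizes the $n$-width once the matching lower bound is in hand.

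For $\varrho_n \gg n^{-r_{\lambda,p,q}}$ I use a fooling-function argument. Fix any $S_k \in \Ss_n$ with sample points $x_1,\ldots,x_k$ ($k \le n$), and choose $m^* := \lceil C_0 n\rceil$ for a constant $C_0 = C_0(\ell)$ to be fixed large. Since $M_{\rho,m^*,s}(x) = M(h_{m^*}^{-1}x - s)$ has $\supp M = [-\ell,\ell]$, each sample point $x_i$ excludes at most $2\ell$ integers $s$, so the ``good'' set
\[
G := \{s \in \ZZ : |s| \le m^*-\ell, \ M_{\rho,m^*,s}(x_i) = 0 \text{ for all } i\}
\]
satisfies $|G| \ge 2(m^*-\ell)+1 - 2\ell n \asymp m^*$ when $C_0$ is large. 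I then construct $\varphi \in S_{\rho,m^*}$ vanishing at every $x_i$ in two cases: for $p \le q$ take $\varphi := M_{\rho,m^*,s^*}$ for any $s^* \in G$; for $p > q$ take $\varphi := \sum_{s \in G} w(x_s)^{-1} M_{\rho,m^*,s}$, so that the coefficient-weight vector $(b_s(\varphi) w(x_s))_{|s|\le m^*-\ell}$ equals $1$ on $G$ and $0$ elsewhere.

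Applying Theorem~\ref{thm:MarcinkiewiczInequality} to these two expansions yields, in the single-B-spline case, $\|\varphi\|_{L_{r,w}(\RR)} \asymp m^{(1/\lambda-1)/r}w(x_{s^*})$ for $r \in \{p,q\}$ and hence the ratio $\|\varphi\|_{L_{q,w}(\RR)}/\|\varphi\|_{L_{p,w}(\RR)} \asymp m^{(1/\lambda-1)(1/q-1/p)} = m^{\delta_{\lambda,p,q}}$, which hits the target precisely when $p \le q$; and in the spread case, $\|\varphi\|_{L_{r,w}(\RR)} \asymp m^{(1/\lambda-1)/r}|G|^{1/r} \asymp m^{1/(\lambda r)}$ and hence the ratio $m^{(1/\lambda)(1/q-1/p)} = m^{\delta_{\lambda,p,q}}$, which hits the target precisely when $p > q$. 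Combined with the Bernstein-type bound of Theorem~\ref{thm:BernsteinInequality}, $\|\varphi\|_{W^r_{p,w}(\RR)} \ll m^{r_\lambda}\|\varphi\|_{L_{p,w}(\RR)}$, the function $\tilde\varphi := \varphi/\|\varphi\|_{W^r_{p,w}(\RR)}$ lies in $\bW^r_{p,w}(\RR)$ and satisfies $S_k\tilde\varphi = 0$, so
\[
\sup_{f \in \bW^r_{p,w}(\RR)} \|f - S_k f\|_{L_{q,w}(\RR)} \ge \|\tilde\varphi\|_{L_{q,w}(\RR)} \gg m^{\delta_{\lambda,p,q} - r_\lambda} = m^{-r_{\lambda,p,q}} \asymp n^{-r_{\lambda,p,q}}.
\]
Taking the infimum over $S_k \in \Ss_n$ gives the desired lower bound.

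The principal obstacle is the case $p > q$: a single B-spline falls short of the target ratio, and the fooling function must instead distribute its mass across $\asymp m^*$ good indices with exactly the compensating factor $w(x_s)^{-1}$ so that the weighted coefficient sequence is essentially constant. Turning the combinatorial count $|G| \asymp m^*$ into a sharp lower bound on the $L_{q,w}/L_{p,w}$ ratio is exactly where the \emph{lower} half of the weighted Marcinkiewicz equivalence of Theorem~\ref{thm:MarcinkiewiczInequality} is essential; the combinatorial selection of $C_0$ ensuring $|G| \asymp m^*$ is routine by comparison.
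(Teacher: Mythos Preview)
Your proof is correct and follows the same overall strategy as the paper: the upper bound from Theorems~\ref{thm:f-Q_{rho,m}f}/\ref{thm:f-P_{rho,m}f}, and the lower bound via a fooling B-spline in $S_{\rho,m^*}$ that vanishes at the given sample points, with the Sobolev norm controlled through the Bernstein-type inequality (Theorem~\ref{thm:BernsteinInequality}). The case split $p\le q$ (single B-spline) versus $p>q$ (a sum of many B-splines) is also the same as in the paper.

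The one genuine difference is in how you compute the $L_{p,w}$ and $L_{q,w}$ norms of the fooling function. The paper takes constant coefficients on the selected B-splines and evaluates the weighted integrals directly. You instead invoke the Marcinkiewicz equivalence of Theorem~\ref{thm:MarcinkiewiczInequality} and, in the $p>q$ case, take coefficients $b_s=w(x_s)^{-1}$ so that the weighted coefficient vector is the indicator of $G$. This buys you two things: (i) the weight is handled cleanly and uniformly, since the norm ratio $\|\varphi\|_{L_{q,w}}/\|\varphi\|_{L_{p,w}}$ reduces to a pure $\ell_q/\ell_p$ ratio on a constant sequence, giving exactly $|G|^{1/q-1/p}\asymp m^{(1/\lambda)(1/q-1/p)}$ regardless of where the adversary forces $G$ to sit inside $[-\rho a_m,\rho a_m]$; and (ii) in the $p\le q$ case the factor $w(x_{s^*})$ cancels in the ratio, so you never need to locate $s^*$ near the origin. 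The paper's direct-integration route is shorter on paper but is sensitive to where the surviving indices $s_j$ land (the weight at the edge of $[-\rho a_m,\rho a_m]$ is exponentially small in $m$), whereas your Marcinkiewicz route is insensitive to this and makes the dependence on $|G|$ completely explicit.
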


The exact convergence rate of  $\varrho_n\big(\bW^r_{p,w}(\RR),  L_{q,w}(\RR)\big)$ as in \eqref{rho_n-d=1} of  Theorem \ref{thm:S_n} has been proven in \cite{DD2024} for $1 <p< \infty$ and $1\le q \le \infty$. 
	This exact convergence rate is achieved by  generalized methods  of  truncated Lagrange interpolation  from \cite{MN2010} which is completely different from the methods proposed in the present paper.  Moreover, the lower bound in 	\eqref{rho_n-d=1} has been proven in \cite{DD2024} for the cases $1 \le p < q \le \infty$ and $1< p < \infty$, $ p \ge q$ which still do not cover all the cases in this theorem.	Let us prove Theorem \ref{thm:S_n}.

\begin{proof}
The upper bound in \eqref{rho_n-d=1} follows from Theorems \ref{thm:f-Q_{rho,m}f} and \ref{thm:f-P_{rho,m}f}.

Let us prove the lower bound  in 	\eqref{rho_n-d=1}  by a method  distinct  from that in \cite{DD2024},  employing the weighted B-spline inequalities in Section \ref{Weighted B-spline inequalities}.
From the definition \eqref{rho_n} we have the following inequality which is often used  for lower estimation of sampling $n$-widths. 
If $F$ is a set of continuous functions on $\RR$ and $X$ is a normed space of functions on $\RR$,	then we have
\begin{equation} \label{rho_n>}
	\varrho_n(F, X) \ge \inf_{\brab{x_1,...,x_n} \subset \RR} \ \sup_{f\in F: \ f(x_i)= 0,\ i =1,...,n} 
	\|f \|_X.
\end{equation}
  
We first consider the case $1 \le q \le p \le \infty$.
For a given $n \in \NN$, we take a number $m > \ell$ satisfying  the inequality $2m + 1 > 4\ell(n + 1)$. Let $\brab{\xi_1,...,\xi_n} \subset \RR$ be arbitrary $n$ points.  Then there are numbers $s_1, ..., s_n \in \ZZ$ such that $|2\ell s_j| \le m - \ell$ and 
$$
\brab{\xi_1,...,\xi_n} \cap \brac{\cup_{j=1}^n [x_{2\ell s_j}, x_{2\ell(s_j + 1)}]} = \varnothing.
$$
Consider the B-spline
	\begin{equation}\label{varphi(x) :=}
\varphi(x) := Cn^{-r_\lambda  - 1/(p\lambda)} \sum_{j =1}^n M(h_m^{-1}x - x_{2\ell s_j}).
\end{equation}
By the construction $\varphi(\xi_i) = 0$, $i =1,...,n$. 
By Theorem \ref{thm:BernsteinInequality}
 there  a number  $0 < \rho < 1$  such that 
	\begin{equation}\nonumber
	\|\varphi^{(r)}\|_{L_{p,w}(\RR)} 
	\le
	C' m^{r_\lambda} \|\varphi\|_{L_{p,w}(\RR)} \ \ \  
	 \forall m  \ge \ell. 
\end{equation}
Again, by the construction and the relation $m \asymp n$,
\begin{equation}\label{|varphi|_{L_{p,w}(RR)}^p}
	\begin{aligned}
		\|\varphi\|_{L_{p,w}(\RR)}^p 
		&=
		C^pn^{-pr_\lambda - 1/\lambda}\sum_{j =1}^n \int_{ x_{2\ell s_j}-\ell}^{ x_{2\ell s_j} +\ell}M(h_m^{-1} x - x_{2\ell s_j})^p \rd x
		\\ &
		= C^pn^{-pr_\lambda  - 1/\lambda} h_m \sum_{j =1}^n \int_{-\ell}^{\ell} M(x)^p \rd x
		\le C^p (2 \ell n) n^{-pr_\lambda  - 1/\lambda} \rho (\nu_\lambda m)^{1/\lambda}/m
		\\ &
		= C^p K m^{-pr_\lambda },
	\end{aligned}
\end{equation}
where $K$ is a constant depending on $\ell, \lambda,\rho$ only. This means that one can choose a constant $C$ independent of $m$ and $n$, in the definition \eqref{varphi(x) :=} of $\varphi$ so that 
$\varphi \in \bW^r_{p,w}(\RR)$. 
By using the inequality  \eqref{rho_n>} in a similar way as in \eqref{|varphi|_{L_{p,w}(RR)}^p} we obtain
\begin{equation}\label{|varphi|_{L_{q,w}(RR)}^q}
	\begin{aligned}
	\varrho_n\big(\bW^r_{p,w}(\RR), L_{q,w}(\RR)\big)^q 	
&\ge 
	\|\varphi\|_{L_{q,w}(\RR)}^q 
	\\	&=
		C^q n^{-qr_\lambda - q/(p\lambda)}
		\sum_{j =1}^n \int_{ x_{2\ell s_j}-\ell}^{ x_{2\ell s_j} +\ell}M(h_m^{-1} x - x_{2\ell s_j})^q \rd x
		\\ &
		= C^q n^{-qr_\lambda - q/(p\lambda)} h_m \sum_{j =1}^n \int_{-\ell}^{\ell} M(x)^q \rd x
		\\ &
			\gg C^q (2 \ell n)  n^{-qr_\lambda - q/(p\lambda)} \rho (\nu_\lambda m)^{1/\lambda}/m
		\\ &
		\gg n^{-qr_\lambda - q/(p\lambda) + 1 + 1/\lambda -1}
	\\ &
	= n^{-q\brac{r_\lambda -   (1/\lambda)(1/q - 1/p)}} =  n^{-qr_{\lambda,p,q}}.
	\end{aligned}
\end{equation}

We now prove the lower bound in \eqref{rho_n-d=1} for the case $1 \le p \le q \le \infty$. 
For a given $n \in \NN$, we take a number $m > \ell$ satisfying  the inequality $2m  + 1 > 2\ell(n + 1)$. Let $\brab{\xi_1,...,\xi_n} \subset \RR$ be arbitrary $n$ points.  Then there is a number $s_0 \in \ZZ$ such that $|2\ell s_0| \le m - \ell$ and 
$$
\brab{\xi_1,...,\xi_n} \cap [x_{2\ell s_0}, x_{2\ell(s_0 + 1)}] = \varnothing.
$$
Consider the B-spline
$$
\psi(x) := Cn^{-r_\lambda  + (1 - 1/\lambda)/p}M(h_m^{-1}x - x_{2\ell s_0}).
$$
By the construction $\varphi(\xi_i) = 0$, $i =1,...,n$. 
By Theorem \ref{thm:BernsteinInequality}
there  exists a number  $0 < \rho < 1$  such that 
\begin{equation}\nonumber
	\|\varphi^{(r)}\|_{L_{p,w}(\RR)} 
	\le
	C' m^{r_\lambda} \|\varphi\|_{L_{p,w}(\RR)} \ \ \  
	\forall m  \ge \ell. 
\end{equation}
Again, by the construction and the relation $m \asymp n$,
\begin{equation}\label{|varphi|_{L_{p,w}(RR)}^p, p<q}
	\begin{aligned}
		\|\varphi\|_{L_{p,w}(\RR)}^p 
		&=
		C^p n^{-pr_\lambda  + (1 - 1/\lambda)}
		\int_{ x_{2\ell s_0}-\ell}^{ x_{2\ell s_0} +\ell}M(h_m^{-1} x - x_{2\ell s_j})^p \rd x
		\\ &
		= C^p n^{-pr_\lambda  + (1 - 1/\lambda)} h_m \int_{-\ell}^{\ell} M(x)^p \rd x
		\le C^p (2 \ell) n^{-pr_\lambda +  1 - 1/\lambda} \rho (\nu_\lambda m)^{1/\lambda}/m
		\\ &
		= C^p K m^{-pr_\lambda },
	\end{aligned}
\end{equation}
where $K$ is a constant depending on $\ell, \lambda,\rho$ only. This means that 
one can choose a constant $C$ independent of $m$ and $n$, in the definition \eqref{varphi(x) :=}
 so that 
$\varphi \in \bW^r_{p,w}(\RR)$. 
By using the inequality  \eqref{rho_n>} in the same way as \eqref{|varphi|_{L_{p,w}(RR)}^p, p<q} we obtain
\begin{equation}\label{|varphi|_{L_{q,w}(RR)}^q}
	\begin{aligned}
		\varrho_n\big(\bW^r_{p,w}(\RR), L_{q,w}(\RR)\big)^q 	
		&\ge 
		\|\varphi\|_{L_{q,w}(\RR)}^q 
		\\	&=
		C^q 
		n^{- qr_\lambda  + q(1 - 1/\lambda)/p} 
		\int_{ x_{2\ell s_0}-\ell}^{ x_{2\ell s_0} +\ell}M(h_m^{-1} x - x_{2\ell s_j})^q \rd x
		\\ &
		= C^q n^{- qr_\lambda  + q(1 - 1/\lambda)/p} h_m  \int_{-\ell}^{\ell} M(x)^q \rd x
		\\ &
		\gg C^q    n^{- qr_\lambda  + q(1 - 1/\lambda)/p} \rho (\nu_\lambda m)^{1/\lambda}/m
		\\ &
		\gg  n^{- qr_\lambda  + q(1 - 1/\lambda)/p + 1/\lambda  - 1}
		\\ &
		= n^{-q\brac{r_\lambda  -   (1/\lambda)(1/p - 1/q)}} =  n^{-qr_{\lambda,p,q}}.
	\end{aligned}
\end{equation}
	\hfill
\end{proof}

\begin{remark} \label{remark3.5}
	It is interesting to  study the computational cost for constructing  the  equidistant,  compact-supported  B-spline quasi-interpolation and interpolation sampling algorithms $Q_{\rho,m}$ and  $P_{\rho,m}$ in the sense of  \cite[Sectionn 4.1.2 Algorithms and Their Cost]{NoWo08}. However, this topic lies outside the scope of the present paper.
	
	Theorem~\ref{thm:S_n} can be interpreted in terms of the computational complexity in the following sense. For $\varepsilon >0$, we define the quantity $n_\varepsilon$ of computational complexity for approximate linear sampling recovery of $f \in \bW^r_{p,w}(\RR)$ with accuracy~$\varepsilon$ by
	\begin{equation}\nonumber
		n_\varepsilon
		:= \ 
		\inf \brab{n\in \NN: \, \exists S_n \in \Ss_n: 
			\sup_{f\in \bW^r_{p,w}(\RR)} \big\|f - S_n f\big\|_{ L_{q,w}(\RR)} \le \varepsilon }.
	\end{equation}
It is evident that $n_\varepsilon$ represents a necessary number of  samples  of 
$f \in \bW^r_{p,w}(\RR)$ to construct a linear sampling 
 algorithm that approximates $f$ with accuracy~$\varepsilon$ in the norm of $L_{q,w}(\RR)$. 	 Under the assumptions and notations of Theorem~\ref{thm:S_n},  we derive that
\begin{equation}\nonumber
	n_\varepsilon
	\asymp \varepsilon^{-1/r_{\lambda,p,q}}, \ \ 0 < \varepsilon \le \varepsilon_0,
\end{equation}
for  some $\varepsilon_0 >0$. Moreover,
if the sampling algorithm $S_{n_\varepsilon} \in \Ss_{n_\varepsilon}$ is either the B-spline quasi-interpolation operator $Q_{\rho,m({n_\varepsilon})}$ or the B-spline interpolation operator $P_{\rho,m({n_\varepsilon})}$, then
	\begin{equation}\nonumber
		\sup_{f\in \bW^r_{p,w}(\RR)} \big\|f - S_{n_\varepsilon} f\big\|_{ L_{q,w}(\RR)} \le \varepsilon .
\end{equation}
\end{remark}

\section{Numerical integration}
\label{Numerical integration}
In this section, we prove that  the equidistant quadratures   generated from  the truncated B-spline  quasi-interpolation and interpolation algorithms, are asymptotically optimal in terms of
$\Int_n\big(\bW^r_{p,w}(\RR)\big)$, and compute the exact convergence rate of 
$\Int_n\big(\bW^r_{p,w}(\RR)\big)$.

The sampling operators $Q_{\rho,m}$ and  $P_{\rho,m}$ generate in a natural way the weighted quadrature operators $I^Q_{\rho,m}$ and  $I^P_{\rho,m}$ by the formula \eqref{I_kf-generated}:
 	\begin{equation}\nonumber
	I^Q_{\rho,m}f: = \ 
	\int_{\RR}Q_{\rho,m}f(x) w(x) \rd x; \ \ I^P_{\rho,m}f: = \ 
	\int_{\RR}P_{\rho,m}f(x) w(x) \rd x.
\end{equation}
Indeed, from the definitions, we can see that $I^Q_{\rho,m}f$ and  $I^P_{\rho,m}f$ with 
$2(m +\ell + j_0) - 1 \le n$ are quadratures of the form \eqref{I_kf} from $\Ii_n$. In particular, by \eqref{L-representation}
	\begin{equation}\nonumber
	I^Q_{\rho,m}f 
		  \ = \ 
		\sum_{|s| \le m +\ell +j_0 -1} \lambda_s f(x_s),
\end{equation}
where 
$$ \lambda_s := \int_{\RR}L_s(x)w(x) \rd x, \ \ 
L_s(x):= L(h_m^{-1}x-s) \chi_{[-\rho a_m, \rho a_m]}(x),
$$
 $\chi_{[-\rho a_m, \rho a_m]}$ is the characteristic function of $[-\rho a_m, \rho a_m]$ and $L$ is as in \eqref{L}.
\begin{theorem} \label{thm:I_n}
	Let  $1\le p\le \infty$, $r \le 2\ell$  and $r_\lambda - (1/\lambda)(1 - 1/p) >0$.   For any $n \in \NN$, let $m(n)$ be the largest integer such that $2(m + \ell +j_0)  - 1 \le n$.  Let the quadrature $I_n \in \Ii_n$ be either   $I^Q_{\rho,m(n)}$ or  $I^P_{\rho,m(n)}$. Then  $I_n$ is asymptotically optimal for  $\Int_n\big(\bW^r_{p,w}(\RR)\big)$ and
	\begin{equation}\label{I_n-thm}
		\Int_n\big(\bW^r_{p,w}(\RR)\big) 
		\asymp 
		\sup_{f\in \bW^r_{p,w}(\RR)} \left|\int_{\RR} f(x) w(x) \, \rd x - I_nf \right|
		\asymp
		n^{- r_\lambda + (1/\lambda)(1 - 1/p)} \ \  \forall n \in \NN.
	\end{equation}
\end{theorem}

	In the  work \cite{DD2023}, we have proven  the exact convergence rate of  $\Int_n(\bW^r_{1,w}(\RR))$ as in \eqref{I_n-thm} of  Theorem~\ref{thm:I_n} for $p=1$. 
	This convergence rate is achieved by  generalized methods  of  truncated  Gaussian quadratures from  \cite{DM2003}. The asymptotically optimal quadrature algorithms proposed in the present paper, are completely different from those in the above cited papers. Let us prove Theorem~\ref{thm:I_n}.

\begin{proof} 
	Let $S_n \in \Ss_n$ be either   $Q_{\rho,m(n)}$ or  $P_{\rho,m(n)}$ which generates  $I^Q_{\rho,m(n)}$ or  $I^P_{\rho,m(n)}$, respectively. We have by Theorem \ref{thm:f-Q_{rho,m}f} or Theorem \ref{thm:f-P_{rho,m}f}
for $q=1$,	
\begin{equation}\nonumber
\sup_{f\in \bW^r_{p,w}(\RR)} \left|\int_{\RR} f(x) w(x) \, \rd x - I_nf \right|
\le
		\sup_{f\in \bW^r_{p,w}(\RR)} \big\|f - S_nf\big\|_{L_{1,w}(\RR)}
		\asymp
		n^{- r_\lambda + (1/\lambda)(1 - 1/p)} \ \ \forall n \in \NN.
	\end{equation}
	This proves the upper bound in \eqref{I_n}.

 In order to prove the lower bound in \eqref{I_n} we  need the following inequality which follows directly from the definition. 
 For  a set  $F$ of continuous functions on $\RR$,	 
 we have
 \begin{equation} \label{Int_n>}
 	\Int_n(F) \ge \inf_{\brab{x_1,...,x_n} \subset \RR} \ \sup_{f\in F: \ f(x_i)= 0,\ i =1,...,n}\bigg|\int_{\RR} f(x) w(x) \, \rd x\bigg|.
 \end{equation}
 Let $\brab{\xi_1,...,\xi_n} \subset \RR$ be arbitrary $n$ points. Consider the B-spline $\varphi$ defined as in \eqref{varphi(x) :=}. As shown in the proof of Theorem \ref{thm:S_n}
  $\varphi(\xi_i) = 0$, $i =1,...,n$, and
 there  exist a number  $0 < \rho < 1$ and a constant $C$ independent of $m$ and $n$, in the definition \eqref{varphi(x) :=} so that 
 $\varphi \in \bW^r_{p,w}(\RR)$. By the construction, \eqref{Int_n>} and \eqref{|varphi|_{L_{q,w}(RR)}^q},
\begin{equation} \nonumber
	\Int_n(F) \ge \bigg|\int_{\RR} \varphi(x) w(x) \, \rd x\bigg|
	=
	\|\varphi\|_{L_{1,w}(\RR)} \gg n^{-r_{\lambda,p,1}} 
	= 	
	n^{- r_\lambda + (1/\lambda)(1 - 1/p)}.
\end{equation}
	\hfill
\end{proof}	

	\begin{remark} \label{remark4.2}				
		The construction of the quadratures $I^Q_{\rho,m}$ or  $I^P_{\rho,m}$ depends on several factors, in particular, the smoothness $r$, integrability parameter $p$ of function $f$, and the used B-splines in the  quasi-interpolation and interpolation operators 
		$Q_{\rho,m}$ or  $P_{\rho,m}$, respectively. In practice, the
		smoothness $r$ and integrability parameter $p$ of function $f$ are frequently unknown, and one often  only has access
		to  its certain samples at a finite set of nodes.  In Theorem~\ref{thm:I_n}, these smoothness and integrability parameter are assumed to be known, and the degree $2\ell$ of used B-splines can be selected as the minimal integer satisfying $r \le 2 \ell$.	
		By contrast, truncated Gaussian quadratures -- constructed from subsets of the zeros of orthonormal polynomials with respect to Freud-type measures \cite{DD2023,MN2010} -- and the  truncated trapezoidal rule -- based on equidistant nodes \cite{GKS2024,KSG2023} -- are independent of these parameters. Consequently, they are well suited for numerical weighted integration of a function even when its exact regularity is unknown.
		This property is  an advantage of the  truncated Gaussian quadrature and truncated trapezoidal  rule over the quadratures  $I^Q_{\rho,m}$ and  $I^P_{\rho,m}$ generated from B-spline quasi-interpolation and interpolation.
		In particular, the former approaches offer robustness to uncertainty in the regularity of $f$,
		 whereas the latter depend on the (often unknown) smoothness  of the integrand.			
	\end{remark}

\section{Multidimensional generalization}
\label{High dimensional generalization}

In this section, we formulate a generalization of  the results in the previous sections
to multidimensional case when $d > 1$, which can be proven in a similar way with certain modifications.

Let $Q$ be an one-dimensional B-spline quasi-interpolation operator defined  as in  \eqref{def:Q}.
We define the linear operator $Q_d$ for functions $f$ on $\RR^d$ by  
\begin{equation} \label{def:Q_d}
	Q_df(\bx):=  \sum_{\bs \in \ZZ^d} \sum_{|\bj| \le\bj_0} \lambda (\bj) f(\bs-\bj)M(\bx-\bs), 
	\ \forall \bx \in \RR^d,
\end{equation} 
where $\bj_0:= (j_0,...,j_0)$ and
$
	M(\bx):= \prod_{i=1}^d M(x_i), \ \ \lambda(\bj):= \prod_{i=1}^d \lambda(j_i)
$
and $|\bj|:= (|j_1|,...,|j_d|)$ for $\bj \in \ZZd$.
The operator $Q_d$ can be seen as the  product $\prod_{i=1}^d Q_i$, where $Q_i=Q$ is the one-dimensional operator applied to $f$ as a univariate function in $x_i$ while the other variables fixed.
The operator $Q_d$ is local and bounded in $C(\RR^d)$.
An operator $Q$ of the form \eqref{def:Q_d} is called a 
quasi-interpolation operator in $C(\RR^d)$  if  it reproduces 
$\Pp_{2\ell-1}^d$, i.e., $Q_df = f$ for every $f \in \Pp_{2\ell-1}^d$, where $\Pp_m^d$ denotes the set of $d$-variate polynomials of degree at most $m$ in each variable. Clearly, if $Q$ is an one-dimensional B-spline quasi-interpolation operator, then $Q_d$ is a $d$-dimensional B-spline quasi-interpolation operator.

If $A$ is an operator in the space of functions on $\RR^d$, 
the operator $A_h$ for $h > 0$ is defined in the same manner as in \eqref{A_h} for the one-dimensional case.
With this definition,  we have
\begin{equation} \nonumber
	Q_{d,h}f(\bx)  \ = \ 
	\sum_{\bs \in \ZZ^d} \sum_{|\bj| \le \bj_0} \lambda (\bj) f(h(\bs-\bj))M(h^{-1}\bx - \bs), \ \forall \bx \in \RR^d.
\end{equation}
For $\forall m \in \NN$ and $0 < \rho < 1$, we make use of the notation
\begin{equation}\nonumber
 \bx_\bk:= h_m \bk :=(h_mk_1,...,h_mk_d),   
\ \bk \in \ZZ^d,
\end{equation}
where recall, $h_m:= \rho a_m/m$.
We introduce the $d$-dimensional truncated equidistant $B$-spline quasi-interpolation operator 
$Q_{d,\rho,m}$
for $m \in \NN$ by
  \begin{equation}\nonumber
	Q_{d,\rho,m}f(\bx)
	:=
	\begin{cases}
		Q_{d,h_m}f(\bx)&  \ \ \text{if} \ \ \bx \in [-\rho a_m,\rho a_m]^d, \\
		0 &  \ \ \text{if} \ \ \bx \notin [-\rho a_m,\rho a_m]^d.
	\end{cases}	 
\end{equation}
By the definition,

\begin{equation}\nonumber
	Q_{d,\rho,m}f(\bx): = \ 
	\sum_{|\bs| \le \bm +\bell - \bone}\sum_{|\bj| \le \bj_0} \lambda (\bj) f(\bx_{\bs-\bj})M(h_m^{-1}\bx - \bs), \ \forall \bx \in \in [-\rho a_m,\rho a_m]^d, \ \forall m \in \NN,
\end{equation}
where $\bone:= (1,...,1)$ and $\bell:= (\ell,...,\ell)$.
The function $Q_{d,\rho,m}f$ is constructed  from $[2(m + \ell + j_0) - 1]^d$ values $\bx_\bk$, $|\bk| \le \bm +\bell +\bj_0 - \bone$, and 
\begin{equation}\nonumber
	\supp Q_{d,\rho,m}f
	=
	[-\rho a_m,\rho a_m]^d.
\end{equation}
The $d$-dimensional truncated equidistant $B$-spline interpolation operator 
$P_{\rho,d,m}$ is defined in the same manner.
It possesses the same properties as  $Q_{d,\rho,m}$ and, moreover,  $P_{\rho,d,m}f$ interpolates $f$  at the points $\bx_\bk$ for $|\bk| \le \bm$, i.e., 
$$
P_{\rho,d,m}f(\bx_\bk) = f(\bx_\bk), \ \ |\bk| \le \bm.
$$

We make use of  the notations:  $\bW^{r,\operatorname{iso}}_{p,w}(\RRd)$ denotes the unit ball in 
$W^{r,\operatorname{iso}}_{p,w}(\RRd)$;
$$
r_{\lambda,d}:= r_\lambda/d; \ \
	r_{\lambda,p,q,d} 
	:=
	r_{\lambda,d} - \delta_{\lambda,p,q}. 
$$
\begin{theorem} \label{thm:f-Q_{rho,m}f,d}
	Let $1 \le p,q \le \infty$, $r \le 2\ell$ and $r_{\lambda,p,q,d} > 0$.	Let $S_{\rho,d,m}$ be either $Q_{d,\rho,m}$ or $P_{\rho,d,m}$. Let $\rho$ be any fixed number satisfying \eqref{rho<max} for  $Q_{d,\rho,m}$, or  \eqref{rho<maxP} for $P_{\rho,d,m}$, respectively.
	Then one can determine explicitly a number 	$\rho:= \rho(a,\lambda,\ell,j_0,d)$  with $0 < \rho < 1$, so that 
	\begin{equation}\nonumber
		\|f - S_{\rho,d,m} f\|_{L_{q,w}(\RRd)} 
		\ll
		m^{- dr_{\lambda,p,q,d}} \|f \|_{W^{r,\operatorname{iso}}_{p,w}(\RRd)} \ \ \forall f \in W^{r,\operatorname{iso}}_{p,w}(\RRd),   \ \forall m \in \NN.  
	\end{equation}
\end{theorem}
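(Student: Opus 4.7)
The plan is to mimic the proof of Theorem~\ref{thm:f-Q_{rho,m}f} in $d$ variables, exploiting the tensor product structure of the operator $S_{\rho,d,m}$, of the weight $w(\bx) = \prod_{i=1}^d w(x_i)$, and of the grid $\{\bx_\bk\}$. Setting $I_m := [-\rho a_m, \rho a_m]$, I use the support property $\supp S_{\rho,d,m} f \subset I_m^d$ to split
\begin{equation*}
\|f - S_{\rho,d,m} f\|_{L_{q,w}(\RRd)} \le \|f - S_{\rho,d,m} f\|_{L_{q,w}(I_m^d)} + \|f\|_{L_{q,w}(\RRd \setminus I_m^d)}.
\end{equation*}

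For the tail term, I would prove a $d$-dimensional analog of Lemma~\ref{lemma:|f|_{L_{q,w}(RR setminus [-rho a_m, rho a_m])}} by writing $\RRd \setminus I_m^d \subset \bigcup_{i=1}^d \{\bx : |x_i| > \rho a_m\}$, applying Fubini together with the factored weight, and invoking the 1D Lemma in $x_i$ with the other variables held as parameters. This gives an upper bound of order $m^{-r_{\lambda,p,q}}\|f\|_{W^r_{p,w}(\RRd)}$, which dominates the required $m^{-dr_{\lambda,p,q,d}}\|f\|_{W^r_{p,w}(\RRd)}$ since $r_{\lambda,p,q} - dr_{\lambda,p,q,d} = (d-1)\delta_{\lambda,p,q} \ge 0$.

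For the interior term, partition $I_m^d$ into cubes $Q_\bk := [\bx_\bk, \bx_{\bk+\bone}]$, and on each cube subtract the multivariate Taylor polynomial $T_r^\bk f$ of total degree $r-1$ at $\bx_\bk$, which is reproduced by $S_{\rho,d,m}$ since $\Pp_{r-1} \subset \Pp_{2\ell-1}^d$ whenever $r \le 2\ell$. The multivariate Taylor integral remainder combined with the fact that $w(\bx) \asymp w(\bx_\bk)$ uniformly on $Q_\bk$ (which holds for $\rho = \rho(a,\lambda,\ell,j_0,d)$ taken sufficiently small, by the same weight argument as in Theorem~\ref{thm:f-Q_{rho,m}f} applied coordinatewise) and H\"older's inequality give
\begin{equation*}
\|f - T_r^\bk f\|_{L_{q,w}(Q_\bk)} \ll h_m^{r - d/p + d/q} \Big(\sum_{|\bn| = r} \|D^\bn f\|_{L_{p,w}(Q_\bk)}^p\Big)^{1/p}.
\end{equation*}
The image term $\|S_{\rho,d,m}(f - T_r^\bk f)\|_{L_{q,w}(Q_\bk)}$ is treated by expanding $S_{\rho,d,m}$ via the tensor form of the cardinal B-spline and applying the 1D pointwise estimate $(x_i - x_{\xi_i})_+^{2\ell-1} w(x_i) \ll w(x_{\eta_i})$ in each coordinate. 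Summing over all $\bk$ in $\ell^q$ and invoking Young's inequality as in~\eqref{f-Q_{rho,m}f,a_m(a)}--\eqref{f-Q_{rho,m}f,a_m(b)} (with $d/p, d/q$ replacing $1/p, 1/q$) produces the $d$-dimensional rate $m^{-dr_{\lambda,p,q,d}}$.

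The main obstacle is the bookkeeping of the multivariate Taylor remainder together with the weight: whereas the 1D case used a single H\"older step, the $d$-variate remainder involves integrals along line segments through cubes, requiring uniform comparability of $w$ on slightly enlarged cubes and forcing $\rho$ to depend on $d$ via the cube-diameter factor $\sqrt d$, but is otherwise routine. A secondary remark: a purely tensorial telescoping decomposition $I - \bigotimes_{i=1}^d S^{(i)}_{\rho,m} = \sum_{i=1}^d \big(\bigotimes_{j<i} S^{(j)}_{\rho,m}\big)\big(I - S^{(i)}_{\rho,m}\big)$ combined with the 1D bounds recovers the correct rate only when $p = q$; for $p \ne q$, Minkowski's integral inequality acts in the wrong direction, so the direct local Taylor approach is needed to capture the Nikol'skii loss of $d\delta_{\lambda,p,q}$ in the exponent.
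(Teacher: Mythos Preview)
The paper does not actually prove this theorem: Section~\ref{High dimensional generalization} only states that the multivariate results ``can be proven in a similar way with certain modifications.'' Your outline---split into tail and interior, use local Taylor expansion on each grid cube, and sum via Young's inequality---is exactly the intended generalization of the proof of Theorem~\ref{thm:f-Q_{rho,m}f}, and the interior part is sound. In particular, your observation that $w(\bx)\asymp w(\bx_\bk)$ uniformly on cubes (and their stencil-enlargements) touching $I_m^d$ follows from the coordinatewise mean-value bound $\big|\,|x_i|^\lambda-|x_{k_i}|^\lambda\,\big|\le \lambda(\rho a_m)^{\lambda-1}\,C h_m\asymp \rho^\lambda$, which is bounded independently of $m$; this is in fact a mild simplification of the monotonicity argument the paper carries out around~\eqref{(x - x_s)_+}. (A minor point: since the weight factors, the relevant scale is $h_m$ per coordinate, not the cube diameter $\sqrt d\,h_m$, so the $d$-dependence of $\rho$ does not enter through $\sqrt d$.)

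The tail argument, however, has a genuine gap. Applying Lemma~\ref{lemma:|f|_{L_{q,w}(RR setminus [-rho a_m, rho a_m])}} in the variable $x_i$ with the remaining variables frozen produces, after Fubini, a mixed norm of the type $L_q^{x'}\big(W^r_{p,w}{}^{x_i}\big)$ on the right-hand side, and for $p\neq q$ this cannot be controlled by $\|f\|_{W^r_{p,w}(\RRd)}$: Minkowski's integral inequality points the wrong way in one of the two regimes, and for $q=\infty$, $p<\infty$ the bound fails outright. This is precisely the same obstruction you (correctly) identify for the tensor telescoping decomposition, and it resurfaces here. The fix is not to reduce to the one-dimensional lemma but to reprove it in $d$ variables: combine a tensor-product restricted-range inequality for $L_{q,w}(\RRd)$ with a $d$-variate Jackson estimate $E_m(f)_{q,w}\ll m^{-d\,r_{\lambda,p,q,d}}\|f\|_{W^r_{p,w}(\RRd)}$ for tensor polynomials of degree $m$ in each variable. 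Both ingredients tensorize from the univariate tools invoked in the proof of Lemma~\ref{lemma:|f|_{L_{q,w}(RR setminus [-rho a_m, rho a_m])}} and in \cite{DD2024a}, and they deliver the tail bound with the exponent $m^{-d\,r_{\lambda,p,q,d}}$ that the theorem requires.
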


\begin{theorem} \label{thm:S_n,d}
	Let  $1\le p,q \le \infty$, $r \le 2\ell$ and $r_{\lambda,p,q,d} >0$.   For any $n \in \NN$, let $m(n)$ be the largest integer such that $[2(m + \ell + j_0)  - 1]^d \le n$.  Let the sampling operator $S_n \in \Ss_n$ be either the B-spline quasi-interpolation operator $Q_{d,\rho,m(n)}$ or the B-spline interpolation operator $P_{d,\rho,m(n)}$. Then  $S_n$ is asymptotically optimal for the sampling $n$-widths 
	$\varrho_n\big(\bW^{r,\operatorname{iso}}_{p,w}(\RRd),  L_{q,w}(\RRd)\big)$ and
	\begin{equation}\nonumber
		\varrho_n\big(\bW^{r,\operatorname{iso}}_{p,w}(\RRd),  L_{q,w}(\RRd)\big) 
		\asymp 
		\sup_{f\in \bW^{r,\operatorname{iso}}_{p,w}(\RRd)} \big\|f - S_nf\big\|_{ L_{q,w}(\RRd)} 
		\asymp
		n^{- r_{\lambda,p,q,d}}.
	\end{equation}
\end{theorem}

The sampling operators $Q_{d,\rho,m}$ and  $P_{d\rho,m}$ generate the weighted quadrature operators $I^Q_{d,\rho,m}$ and  $I^P_{d,\rho,m}$ by the formula \eqref{I_kf-generated} as
\begin{equation}\nonumber
	I^Q_{d,\rho,m}f: = \ 
	\int_{\RRd}Q_{d,\rho,m}f(\bx) w(\bx) \rd \bx; \ \ I^P_{d,\rho,m}f: = \ 
	\int_{\RRd}P_{d,\rho,m}f(\bx) w(\bx) \rd \bx, 
\end{equation}
respectively.

\begin{theorem} \label{thm:I_n,d}
	Let  $1\le p\le \infty$, $r \le 2\ell$ and $r_{\lambda,d} - (1/\lambda)(1 - 1/p) >0$.   For any $n \in \NN$, let $m(n)$ be the largest integer such that $[2(m +\ell +j_0)  - 1]^d \le n$.  Let the quadrature operator $I_n \in \Ii_n$ be either   $I^Q_{d,\rho,m(n)}$ or  $I^P_{d,\rho,m(n)}$. Then  $I_n$ is asymptotically optimal for  $\Int_n\big(\bW^{r,\operatorname{iso}}_{p,w}(\RRd)\big)$ and
	\begin{equation}\nonumber
		\Int_n\big(\bW^{r,\operatorname{iso}}_{p,w}(\RRd)\big) 
		\asymp 
		\sup_{f\in \bW^{r,\operatorname{iso}}_{p,w}(\RRd)} \left|\int_{\RRd} f(\bx) w(\bx) \, \rd \bx - I_nf \right|
		\asymp
		n^{- r_{\lambda,d} + (1/\lambda)(1 - 1/p)} \ \  \forall n \in \NN.
	\end{equation}
\end{theorem}

Denote by $S_{d,\rho,m}$, $m > \ell$,  the subspace in $C_w(\RRd)$ of all B-spline $\varphi$ on $\RRd$ of the form
\begin{equation}\nonumber
	\varphi(\bx) = \ 
	\sum_{|\bs| \le \bm - \bell} b_\bs(\varphi) M_{d,\rho,m,s}(\bx), \ \forall \bx \in \RRd, 
\end{equation}
where  $M_{d,\rho,m,\bs}(\bx):= 	M(h_m^{-1}\bx -  \bs)$. Similarly  to the univariate case, we have
\begin{equation}\nonumber
	\varphi(x) = \ 
	\sum_{|\bs| \le \bm - \bell}\sum_{|\bj| \le \bj_0} \lambda (\bj) \varphi(\bx_{\bs - \bj})M(h_m^{-1}\bx - \bs)\  \ 
	\forall \varphi \in S_{d,\rho,m}, \ \forall \bx \in \RRd.
\end{equation}
Moreover, the B-splines $(M_{d,\rho,m,\bs})_{|\bs| \le \bm - \bell}$ is a basis in $S_{d,\rho,m}$,  $\dim S_{d,\rho,m}= [2(m - \ell) + 1]^d$ and
\begin{equation}\nonumber
	\supp \varphi
	=
	[-\rho a_m,\rho a_m]^d  \  \ \forall \varphi \in S_{d,\rho,m}.
\end{equation}
For $1 \le p \le \infty$, $n \in  \NN$ and a sequence $(c_\bs)_{|\bs| \le \bn}$  we introduce the norm
\begin{equation}\nonumber
	\|(c_\bs)\|_{p,d,w,n}
	:=
	\brac{\sum_{|\bs| \le \bn} |w(\bx_\bs)c_\bs|^p}^{1/p}
\end{equation}
for $1 \le p  < \infty$ with the corresponding modification when $p = \infty$, where $\bn:= (n,...,n)$.

We have also the following multidimensional Marcinkiewicz- Nikol'skii- and Bernstein-type inequalities. Let $1 \le p,q\le \infty$.	Let $\rho$ be any fixed number satisfying \eqref{rho<max}.	Then    for every 
$m \ge \ell$ and every $\varphi \in S_{\rho,d,m}$
	\begin{equation}\nonumber
		\|\varphi\|_{L_{p,w}(\RR)} 
		\asymp
		m^{d(1/\lambda - 1)/p}\|(\varphi(\bx_\bs))\|_{p,d,w,m}
		\asymp
		m^{d(1/\lambda - 1)/p}\|(b_\bs(\varphi))\|_{p,d,w, d,m-\ell};
	\end{equation}
	\begin{equation}	\nonumber
		\|\varphi\|_{L_{q,w}(\RRd)} 
		\ll
		m^{d\delta_{\lambda,p,q}}	\|\varphi\|_{L_{p,w}(\RRd)};  
	\end{equation}
	\begin{equation}\nonumber
		\|\varphi\|_{W^{r,\operatorname{iso}}_{p,w}(\RRd)} 
		\ll
		m^{r_\lambda} \|\varphi\|_{L_{p,w}(\RRd)}.
	\end{equation}
	
	\begin{remark}
		{\rm	
			All the results in Sections \ref{Optimality of sampling algorithms}--\ref{High dimensional generalization} are still hold true if  the truncated B-spline  quasi-interpolation and interpolation operators $Q_{\rho,m}$ and $P_{\rho,m}$ are  replaced by $\bar{Q}_{\rho,m}$ and $\bar{P}_{\rho,m}$, respectively.
		}
	\end{remark}

\bigskip
\noindent
{\bf Acknowledgments:}  
This work is funded by the Vietnam National Foundation for Science and Technology Development (NAFOSTED) in the frame of the NAFOSTED--SNSF Joint Research Project under  Grant 
IZVSZ2$_{ - }$229568.  
 A part of this work was done when  the author was working at the Vietnam Institute for Advanced Study in Mathematics (VIASM). He would like to thank  the VIASM  for providing a fruitful research environment and working condition. The author  gratefully acknowledges the referees for valuable comments and remarks that significantly improved the presentation of this paper.
 
 \appendix
 \section{Appendix} 
 \label{appendix}

\subsection{Proof of Theorem \ref{thm:f-P_{rho,m}f}} 
\label{Proof of Theorem ref{thm:f-P_{rho,m}f}}

\begin{proof}
Fix a positive number $\rho$ satisfying \eqref{rho<maxP}.	By the same argument as in the proof of Theorem \ref{thm:f-Q_{rho,m}f}, to prove \eqref{f-P_{rho,m}f,polynomial} it is sufficient to show that there exists $0 < \rho < 1$ such that 
	\begin{equation}\label{f-P_{rho,m}f,T_{rho,m}}
		\|f - P_{\rho,m} f\|_{L_{q,w}([-\rho a_m,\rho a_m])}
		\ll
		m^{- r_{\lambda,p,q}} \|f \|_{W^r_{p,w}(\RR)} \ \ \forall f \in W^r_{p,w}(\RR),   \ \forall m \in \NN.   
	\end{equation}
	We have by \eqref{suppP_{rho,m}},
	\begin{equation}\nonumber
		\|f - P_{\rho,m} f\|_{L_{q,w}([-\rho a_m,\rho a_m])}^q
		=
		\sum_{k=-m}^{m - 1} \|f - P_{\rho,m} f\|_{L_{q,w}([x_k,x_{k+1}))}^q.
	\end{equation}
	Let us estimate each term in the sum of the last equation. For a given $k \in \ZZ$,
	let $T_rf$
	be the $r$th  Taylor polynomial of  $f$ at $x_k$ given as in \eqref{T_rf}. Let a number  $k=-m,...,m-1$ be given. We assume $x_k \ge 0$. The case when $x_k < 0$ can be treated similarly.
	For every $ x \in  [x_k,x_{k+1}]$, 
	$$
	f(x) - P_{\rho,m}f(x) = f (x) - T_rf(x) - P_{\rho,m}[f (x) - T_rf(x)],
	$$
	since the operator $P_{\rho,m}$ reproduces on $[x_k,x_{k+1}]$ the polynomial $T_rf$.
	Hence,
	\begin{equation}\label{f-P_{rho,m}f,x_ka}
		\|f - P_{\rho,m} f\|_{L_{q,w}([x_k,x_{k+1}])}
		\le
		\|f - T _rf\|_{L_{q,w}([x_k,x_{k+1}])} +
		\|P_{\rho,m}(f -  T_rf)\|_{L_{q,w}([x_k,x_{k+1}])}. 
	\end{equation}
	The first term in the right-hand side can be estimated as in \eqref{f-Q_{rho,m}f,x_kc}.	For the second term we have 
	%
	for every $ x \in  [x_k,x_{k+1}]$, 
	\begin{equation}\label{P(f-T)}
		\begin{aligned}
			\|P_{\rho,m}\brac{f - T_rf}\|_{L_{q,w}([x_k,x_{k+1}])} 
			& \le 
			\|R_{\rho,m}\brac{f - T_rf}\|_{L_{q,w}([x_k,x_{k+1}])} \\
			& \ \ \ +
			\|Q_{\rho,m}\brac{f - T_rf}\|_{L_{q,w}([x_k,x_{k+1}])} \\
			& \ \ \ +  
			\|R_{\rho,m}Q_{\rho,m}\brac{f - T_rf}\|_{L_{q,w}([x_k,x_{k+1}])}.
		\end{aligned}	
	\end{equation}	
	Let us estimate each term in the sum in the right-hand side of the last inequality. The second term can be estimated as in \eqref{|Q_{rho,m}(f -  T_rf)|}.  We estimate the first term.
	Let $g \in C_w(\RR)$. 
	By  \eqref{M_2ell=}  for  $x \in [x_k, x_{k+1}]$,
	\begin{equation} \nonumber
		R_{\rho,m}g(x)  \ = \ 
		\sum_{|s| \le k - 1}  
		\sum_{i =0}^{2\ell} c_i h_m^{1-2\ell}g(x_{ s }) (2^\kappa x - x_{2^\kappa s + i - \ell})_+^{2\ell - 1}, 
	\end{equation}
	where 
	$$
	c_i= \frac{1}{(2\ell - 1)!M(0)}(-1)^i \binom{2\ell}{i}.
	$$
	We rewrite the last equality in a more compact form as
	\begin{equation} \nonumber
		R_{\rho,m}g(x)  \ = \ 
		\sum_{(s,i) \in J_k^R} c_i	\Phi_{\xi,s}(x) \ \
		\forall x \in [x_k, x_{k+1}], 
	\end{equation}
	where  $$J_k^R:= \brab{(s,i): \, |s| \le k - 1; \  i = 0,1,..., 2\ell},$$
	\begin{equation}\label{eta,xiR}
		\xi := 2^\kappa s +  i - \ell, 
	\end{equation}
	and
	\begin{equation}\label{F_{xi,eta}}
		\Phi_{\xi,s}g(x):=	g(x_s) h_m^{1-2\ell}(2^\kappa x - x_\xi)_+^{2\ell - 1}.
	\end{equation}
	Then we have 
	\begin{equation} \label{|R_{rho,m}(f -  T_rf)|}
		\|R_{\rho,m}g\|_{L_{q,w}( [x_k, x_{k+1}])} 	
		\ \ll \ 
		\sum_{(s,i) \in J_k^R} \|\Phi_{\xi,s}g \|_{L_{q,w}([x_k, x_{k+1}])}. 	
	\end{equation}
	By a computation we deduce 
	\begin{equation} \label{|G_{xi,s}|}
		\|\Phi_{\xi,s} g\|_{L_{q,w}( [x_k, x_{k+1}])}
		\ = \
		2^{\kappa/q}\|F_{\xi,s}g \|_{L_{q,w_\kappa}([x_{2^\kappa k},  x_{2^\kappa(k+1)}])},
	\end{equation}
	where $w_\kappa(x):= e^{-a_\kappa|x|^\lambda}$, $a_\kappa:=a2^{-\kappa \lambda}$ and
	\begin{equation}\label{F_{xi,eta}}
		F_{\xi,s}g(x):=	g(x_s) h_m^{1-2\ell}(x - x_\xi)_+^{2\ell - 1}.
	\end{equation}
	Let us prove that 
	\begin{equation}\label{(x - x_s)_+R}
		h_m^{1 - 2\ell} (x - x_\xi)_+^{2\ell - 1}w_\kappa(x)
		\ll 
		w_\kappa(x_s)  \ \
		\forall x \in [x_{2^\kappa k},  x_{2^\kappa(k+1)}], \ \ (s,i) \in J_k^R.
	\end{equation}
	If $\xi\ge 2^\kappa(k+1)$, as $(x - x_\xi)_+ = 0$ for $x \in  [x_{2^\kappa k},  x_{2^\kappa(k+1)}]$, this inequality is trivial. If $\xi< 2^\kappa(k+1)$ and $ s \le 2^\kappa k$, then $w_\kappa(x) \le w_\kappa(x_s)$  and for $(s,i) \in J_k^R$,
	$$
	(x - x_\xi)_+^{2\ell - 1} 
	\le
	(x_{2^\kappa(k+1)} - x_{2^\kappa(k - 2\ell -1) - \ell})_+^{2\ell - 1} 
	\ll  
	h_m^{2\ell - 1}
	$$
	for every $x \in  [x_{2^\kappa k},  x_{2^\kappa(k+1)}]$.  Hence we obtain \eqref{(x - x_s)_+R}.
	Consider the remaining case when $\xi  < 2^\kappa(k+1)\le s$.
	For the function 
	$$
	\phi(x):= (x - x_\xi)^{2\ell - 1}w_\kappa(x),
	$$	
	we have 
	$$
	\phi'(x)= 
	(x - x_\xi)^{2\ell - 2}w_\kappa(x)
	\big[(2\ell-1)  -  a_\kappa \lambda x^{\lambda - 1}(x - x_\xi) \big].
	$$
	Since the function $a_\kappa \lambda x^{\lambda - 1}(x - x_\xi)$ is continuous, strictly increasing on $[x_\xi,\infty )$, and ranges from $0$ to $\infty$ on this interval, there exists a unique point  $t \in (x_\xi,\infty )$ such that $\phi'(t)=0$,  $\phi'(x)>0$ for $x < t $ and $\phi'(x)<0$ for $x >t$. By definition,
	$$
	\phi'(x_s)= 
	(x_s - x_\xi)^{2\ell - 2}w_\kappa(x_s)
	\big[(2\ell-1)  -  a_\kappa \lambda x^{\lambda - 1}(x_s - x_\xi) \big].
	$$
	We have 
	$$
	x_s \le  m  h_m = \rho a_m,
	$$ 
	\begin{equation}\label{x_eta - x_xi}
		x_s - x_\xi = (s - 2^\kappa s - i + \ell)h_m \le \ell h_m,
		=
		\ell \rho a_m/m
	\end{equation}
	and 
	$a_m := \nu_\lambda m^{1/\lambda}$. 
	Hence, by using the condition \eqref{rho<maxP} and $a_\kappa:=a2^{-\kappa \lambda}$ we derive
		$$
		a_\kappa\lambda (x_s - x_\xi) x_\eta^{\lambda - 1} 
		\le 
		a_\kappa\lambda\ell (\rho a_m/m)(\rho a_m)^{\lambda - 1}
		= 
		a2^{-\kappa \lambda}\lambda \nu_\lambda^\lambda \rho^\lambda
		<
		2\ell - 1,
		$$
		or, equivalently, $\phi'(x_\eta) >0$.
	 This means that $x_\eta \in (x_\xi, t)$ and, therefore,  $\phi'(x)>0$  
	for every $x \in [x_\xi, x_\eta]$. It follows that the function $\phi$ is increasing on the interval 
	$[x_\xi, x_\eta]$. 
	In particular, we have for every $x \in  [x_{2^\kappa k},  x_{2^\kappa(k+1)}] \subset [x_\xi, x_s]$,
	$$
	(x - x_\xi) w_\kappa(x) \le (x_\eta - x_\xi) w_\kappa(x_\eta), 
	$$
	which together with \eqref{x_eta - x_xi} implies \eqref{(x - x_s)_+R}. With $\xi,s$ as in \eqref{eta,xiR}
	by \eqref{(x - x_s)_+},
	\begin{equation}\nonumber
		|F_{\xi,s}g(x)|w_\kappa(x)
		\le 
		|g(x_s)|w_\kappa(x_s) \ \
		\forall x \in  [x_{2^\kappa k},  x_{2^\kappa(k+1)}], \ \ \forall (s,i) \in J_k^R.
	\end{equation}
	Applying this inequality for $g = f - T_rf$, in a way similar to \eqref{|F_{xi,eta}(x)(f - T_rf)|w(x)}, we get 
	\begin{equation} \nonumber
		|F_{\xi,s}(f - T_rf)(x)|w_\kappa(x)
		\le 
		h_m^{r-1/p} \big\|f^{(r)}\big\|_{L_{p,w_\kappa}([x_{s-1}, x_s])}\ \
		\forall x \in  [x_{2^\kappa k},  x_{2^\kappa(k+1)}], \ \ \forall (s,i) \in J_k^R.
	\end{equation}
	Hence, analogously to \eqref{f-Q_{rho,m}f,x_kc} we derive
	\begin{equation} \nonumber
		\|F_{\xi,s}(f - T_rf)\|_{L_{q,w_\kappa}( [x_{2^\kappa k},  x_{2^\kappa(k+1)}])} 
		\ll
		m^{-r'_{\lambda,p,q}}  \big\|f^{(r)}\big\|_{L_{p,w_\kappa}([x_{s-1},  x_s])},
	\end{equation}
	where $r'_{\lambda,p,q}$ is as in \eqref{r'}.
	From the last inequality and
	\begin{equation} \nonumber
		\big\|f^{(r)}\big\|_{L_{p,w_\kappa}([x_{s-1}, x_s])}  
		= 	
		2^{r\kappa - 1/p}\big\|f^{(r)}\big\|_{L_{p,w}([2^{-\kappa}x_{s-1}, 2^{-\kappa}x_s])}
	\end{equation}	
	it follows that
	\begin{equation} \nonumber
		\|F_{\xi,\eta}(f - T_rf)\|_{L_{q,w_\kappa}( [x_{2^\kappa k},  x_{2^\kappa(k+1)}])} 
		\ll
		m^{-r'_{\lambda,p,q}}  
		\big\|f^{(r)}\big\|_{L_{p,w}([2^{-\kappa}x_{s-1}, 2^{-\kappa}x_s])},
	\end{equation}
	which together with \eqref{|R_{rho,m}(f -  T_rf)|}--\eqref{|G_{xi,s}|} implies 
	\begin{equation} \label{|R_{rho,m}(f -  T_rf)|(2)}
		\|R_{\rho,m}(f -  T_rf)\|_{L_{q,w}([x_k, x_{k+1}])} 	
		\ll
		m^{-r'_{\lambda,p,q}}  
		\sum_{(s,i) \in J_k^R}	\big\|f^{(r)}\big\|_{L_{p,w}([2^{-\kappa}x_{s-1}, 2^{-\kappa}x_s])}.
	\end{equation}
	
	We now process the estimation of the third term in the right-hand side of \eqref{P(f-T)}. 
	By using  formula \eqref{RQ}, 
	we can rewrite 
	\begin{equation} \label{RQ_{rho,m}(f -  T_rf)}
		(RQ)_{\rho,m}(f -  T_rf)(x)  \ = \ 
		\sum_{(s,i,j) \in J_k^{RQ}} c_{s,i,j} 	G_{\xi,\eta}(f -  T_rf)(x) \ \
		\forall x \in [x_k, x_{k+1}], 
	\end{equation}
	where 
	$$
	c_{s,i,j}:= M(0)^{-1}\lambda (j)M(i).
	$$
	$$
	J_k^{RQ}:= \brab{(s,i,j): \, |s-i| \le k - 1; \  |i | \le \ell, \ |j| \le j_0},
	$$
	\begin{equation}\nonumber
		\xi := 2^\kappa(s-i) + i - \ell, \ \eta:= s- i - j,
	\end{equation}
	and
	\begin{equation} \nonumber
		G_{\xi,\eta}(f -  T_rf)(x):=	(f -  T_rf)(x_\eta) h_m^{1-2\ell}(2^\kappa x - x_\xi)_+^{2\ell - 1}.
	\end{equation}
	
	Let us prove  that 
	\begin{equation}\label{(x - x_s)_+RQ}
		h_m^{1 - 2\ell} (2^\kappa x - x_\xi)_+^{2\ell - 1}w_\kappa(x)
		\ll 
		w_\kappa(x_\eta)  \ \
		\forall x \in [x_k,  x_{k+1}], \ \ (s,i,j) \in J_k^{RQ}.
	\end{equation}
	If $\xi\ge 2^\kappa(k+1)$, as $(2^\kappa x - x_\xi)_+ = 0$ for 
	$x \in [x_k,  x_{k+1}]$, this inequality is trivial. If $\xi< 2^\kappa(k+1)$ and $ \eta \le  k$, then $w_\kappa(x) \le w_\kappa(x_\eta)$  and for $(s,i,j) \in J_k^{RQ}$,
	$$
	(2^\kappa x - x_\xi)_+^{2\ell - 1} 
	\le
	(x_{2^\kappa(k+1)} - x_{2^\kappa(k - 3\ell -1)})_+^{2\ell - 1} 
	\ll  
	h_m^{2\ell - 1}
	$$
	for every $x \in  [x_k,  x_{k+1}]$.  Hence we obtain \eqref{(x - x_s)_+R}.
	Consider the remaining case when $\xi  < 2^\kappa(k+1)$ and 
	$k+1\le \eta$.
	For the function 
	$$
	\phi(x):= (2^\kappa x - x_\xi)^{2\ell - 1}w_\kappa(x),
	$$	
	we have 
	$$
	\phi'(x)= 
	(2^\kappa x - x_\xi)^{2\ell - 2}w_\kappa(x)
	\big[2^\kappa(2\ell-1)  -  a_\kappa \lambda x^{\lambda - 1}(2^\kappa x - x_\xi) \big].
	$$
	Since the function $a_\kappa \lambda x^{\lambda - 1}(x - x_\xi)$ is continuous, strictly increasing on $[x_\xi,\infty )$, and ranges from $0$ to $\infty$ on this interval, there exists a unique point  $t \in (x_\xi,\infty )$ such that $\phi'(t)=0$,  $\phi'(x)>0$ for $x < t $ and $\phi'(x)<0$ for $x >t$. By definition,
	$$
	\phi'(x_\eta)= 
	(2^\kappa x_\eta - x_\xi)^{2\ell - 2}w_\kappa(x_\eta)
	\big[2^\kappa(2\ell-1)  -  a_\kappa \lambda x_\eta^{\lambda - 1}(2^\kappa x_\eta - x_\xi) \big].
	$$
	We have 
	$$
	x_\eta \le  m  h_m = \rho a_m,
	$$ 
	\begin{equation}\label{x_eta - x_xiRQ}
	2^\kappa x_\eta - x_\xi 
	\le 
	(2^\kappa j_0 +  2\ell) h_m,
		=
		(2^\kappa j_0 +  2\ell) \rho a_m/m
	\end{equation}
	and 
	$a_m := \nu_\lambda m^{1/\lambda}$. Hence, by condition \eqref{rho<maxP} and $a_\kappa:=a2^{-\kappa \lambda}$,
	$$
	a_\kappa\lambda (2^\kappa x_\eta - x_\xi) x_\eta^{\lambda - 1} 
	\le 
	a_\kappa\lambda(2^\kappa j_0 +  2\ell) (\rho a_m/m)(\rho a_m)^{\lambda - 1}
	< 2 \ell - 1.
	$$
	This means that $2^\kappa x_\eta \in (x_\xi, t)$ and, therefore,  $\phi'(x)>0$  
		for every $x \in [x_\xi, 2^\kappa x_\eta]$. It follows that the function $\phi$ is increasing on the interval 
		$[x_\xi, 2^\kappa x_\eta]$. In particular, we have for every 
		$x \in [x_k,x_{k+1}] \subset [x_\xi, 2^\kappa x_\eta]$,
		$$
		(2^\kappa x - x_\xi) w(x) \le (2^\kappa x_\eta - x_\xi) w(x_\eta), 
		$$
		which together with \eqref{x_eta - x_xiRQ} implies \eqref{(x - x_s)_+RQ}. 
	
	By using formula 	\eqref{RQ_{rho,m}(f -  T_rf)} , in a way similar to the proof of 
	\eqref{|R_{rho,m}(f -  T_rf)|(2)}, we can establish the bound
	\begin{equation} \label{|RQ_{rho,m}(f -  T_rf)|}
		\|(RQ)_{\rho,m}(f -  T_rf)\|_{L_{q,w}([x_k, x_{k+1}])} 	
		\ll
		m^{-r'_{\lambda,p,q}}  
		\sum_{(s,i,j) \in J_k^{RQ}}	\big\|f^{(r)}\big\|_{L_{p,w}([2^{-\kappa}x_{s-j -1}, 2^{-\kappa}x_{s-j}])}.
	\end{equation}
	
	By combining  \eqref{f-P_{rho,m}f,x_ka}, \eqref{f-Q_{rho,m}f,x_kc}, \eqref{P(f-T)}, \eqref{|R_{rho,m}(f -  T_rf)|(2)}, \eqref{|RQ_{rho,m}(f -  T_rf)|} and  \eqref{|Q_{rho,m}(f -  T_rf)|}, we have
	\begin{equation} \nonumber
		\|f - P_{\rho,m} f\|_{L_{q,w}([x_k,x_{k+1}])}
		\ll 	m^{-r'_{\lambda,p,q}} \brac{A^T_k + A^R_k + A^Q_k + A^{RQ}_k},  
		\ \ \forall x \in [x_k,x_{k+1}], \ \forall k \in \ZZ,
	\end{equation}
	where
	\begin{equation}\nonumber
		A^T_k :=	\big\|f^{(r)}\big\|_{L_{p,w}([x_k, x_{k+1}])}, \ \ 
		A^R_k:= \sum_{(s,i) \in J_k^R}	\big\|f^{(r)}\big\|_{L_{p,w}([2^{-\kappa}x_{s-1}, 2^{-\kappa}x_s])},
	\end{equation}	
	\begin{equation}\nonumber
		A^Q_k := \sum_{(s,i,j) \in J_k^Q} \big\|f^{(r)}\big\|_{L_{w,p}([x_{s-j-1}, x_{s-j}])}, \ \ 
		A^{RQ}_k:= \sum_{(s,i,j) \in J_k^{RQ}}	\big\|f^{(r)}\big\|_{L_{p,w}([2^{-\kappa}x_{s-j -1}, 2^{-\kappa}x_{s-j}])}.
	\end{equation}			
	Based on this inequality  by arguments and estimations similar to 
	\eqref{|f - Q_{rho,m}f|}--\eqref{f-Q_{rho,m}f,a_m(b)} in the proof of \eqref{f-Q_{rho,m}f,T_{rho,m}} we prove  \eqref{f-P_{rho,m}f,T_{rho,m}}.
	The theorem has been proven.
	\hfill	
\end{proof}	

\subsection{Proof of Theorem \ref{thm:MarcinkiewiczInequality}} 
\label{Proof of Theorem ref{thm:MarcinkiewiczInequality}}

 \begin{proof}	
 	Fix a positive number $\rho$ satisfying \eqref{rho<max}.
 	We first prove the norm equivalence 
 	\begin{equation}\label{MarcinkiewiczInequality1} 
 		\|\varphi\|_{L_{p,w}(\RR)} 
 		\asymp
 		m^{(1/\lambda - 1)/p}\|(\varphi(x_s))\|_{p,w,m} 
 		\ \ 	\forall \varphi \in S_{\rho,m},   \ \forall m  \ge \ell. 
 	\end{equation}
 	Due to \eqref{supp varphi}, to prove \eqref{MarcinkiewiczInequality1} it is sufficient to show that there exists  a number 
 	$\rho:= \rho(a,\lambda,\ell,j_0)$  with $0 < \rho < 1$  such that for the first term in the right-hand side of \eqref{f-Q},
 	\begin{equation}\label{varphi><}
 		\|\varphi\|_{L_{q,w}([-\rho a_m,\rho a_m])}
 		\asymp
 		m^{(1/\lambda - 1)/p}
 		\|(\varphi(x_s)\|_{p,w,m} \ \ \  
 		\forall \varphi \in S_{\rho,m},   \ \forall m  \ge \ell. 
 	\end{equation}
 	Let $ \varphi \in S_{\rho,m}$. 	We have
 	\begin{equation*}
 		\|\varphi\|_{L_{p,w}([-\rho a_m,\rho a_m])}^p
 		=
 		\sum_{k=-m}^{m  - 1} \|\varphi\|_{L_{p,w}([x_k,x_{k+1}])}^p.
 	\end{equation*}
 	By  \eqref{Q_{rho,m}:=} and   \eqref{varphi=(1)} for  $x \in [x_k, x_{k+1}]$,
 	\begin{equation} \nonumber
 		\varphi(x)  \ = \ 
 		\sum_{s =k-\ell+1}^{k+\ell} \sum_{|j| \le j_0}  
 		\sum_{i =0}^{2\ell} c_{i,j} h_m^{1-2\ell}\varphi(x_{s - j}) (x - x_{s + i  - \ell})_+^{2\ell - 1}, 
 	\end{equation}
 	where $c_{i,j}$ is as in \eqref{c_{i,j}:=}.
 	We rewrite the last equality in a more compact form as
 	\begin{equation} \nonumber
 		\varphi(x)  \ = \ 
 		\sum_{(s,i,j) \in J_k} c_{i,j} 	F_{\xi,\eta}\varphi(x) \ \
 		\forall x \in [x_k, x_{k+1}], 
 	\end{equation}
 	where  $J_k^Q$ is as in \eqref{J_k^Q}, $\xi,\eta$ as in \eqref{eta,xi}
 	and
 	$F_{\xi,\eta}$ as in \eqref{F_{xi,eta}}.
 	With the chosen number 
 	$\rho$ satisfying  \eqref{rho<max},
 	and $\eta,\xi$ as in \eqref{eta,xi}, we get by \eqref{(x - x_s)_+},
 	\begin{equation}\nonumber
 		|F_{\xi,\eta}\varphi(x)|w(x)
 		\le 
 		|\varphi(x_\eta)|w(x_\eta) 
 		\ \
 		\forall x \in [x_k, x_{k+1}], \ \ \forall (s,i,j) \in J_k^Q.
 	\end{equation}
 	By applying  the norm $\|\cdot\|_{L_{p,w}([x_k,x_{k+1}])}$  to the left-hand side we get
 	\begin{equation}\nonumber
 		\|F_{\xi,\eta}\varphi\|_{L_{q,w}([x_k,x_{k+1}])}^p 
 		\le 
 		h_m	|\varphi(x_\eta)w(x_\eta)|^p 
 		\ \
 		\forall (s,i,j) \in J_k^Q.
 	\end{equation} 	
 	From the last inequality, \eqref{f-Q_{rho,m}f,x_ka} and \eqref{f-Q_{rho,m}f,x_kc} it follows that
 	\begin{equation} \nonumber
 		\|\varphi\|_{L_{p,w}( [x_k, x_{k+1}])}^p 	
 		\ \ll \ 
 		m^{1/\lambda - 1}\sum_{(s,i,j) \in J_k^Q} |\varphi(x_{s-j})w(x_{s-j})|^p. 	
 	\end{equation}
 	Since $\varphi(x_s)=0$ for $|s|> m$, we obtain	
 	\begin{equation}\nonumber
 		\begin{aligned}			
 			\|\varphi\|_{L_{p,w}([-\rho a_m,\rho a_m])}^p
 			\ &\ll \ 
 			m^{1/\lambda - 1}	\sum_{k=-m }^{m  - 1}
 			\sum_{(s,i,j) \in J_k^Q} |\varphi(x_{s-j})w(x_{s-j})|^p
 			\\ &\ll \ 
 			m^{1/\lambda - 1}	\sum_{|s| \le m}
 			|\varphi(x_s)w(x_s)|^p	 
 		\end{aligned}
 	\end{equation}
 	This proves the inequality
 	\begin{equation}\label{varphi<}
 		\|\varphi\|_{L_{q,w}([-\rho a_m,\rho a_m])}
 		\ll
 		m^{(1/\lambda - 1)/p}\|(\varphi(x_s))\|_{p,w,m}  \ \  
 		\forall \varphi \in S_{\rho,m},   \ \forall m  \ge \ell. 
 	\end{equation}
 	
 	Let us prove the inverse inequality. Let $|s| \le m$. We assume $s \ge 1$.  The case $s < 1$ can be treated analogously with a modification.  From \cite[(2.14), Chapter 4]{DeLo93B} it follows that
 	\begin{equation}\nonumber
 		|\varphi(x_s)|^p
 		\le 
 		\|\varphi\|_{L_\infty([x_{s-1},x_s])}^p 
 		\ll h_m^{-1} \|\varphi\|_{L_p([x_{s-1},x_s])}^p = h_m^{-1} \int_{x_{s-1}}^{x_s} |\varphi(x)|^p \rd x. 		 	
 	\end{equation}
 	Hence,
 	\begin{equation}\label{|varphi(x_s)w(x_s)|^p}
 		|\varphi(x_s)w(x_s)|^p
 		\ll   h_m^{-1}\int_{x_{s-1}}^{x_s} |\varphi(x)w(x)|^p \rd x =\|\varphi\|_{L_{p,w}([x_{s-1},x_s])}^p,		 	
 	\end{equation}
 	and, consequently,		 
 	\begin{equation}\nonumber
 		m^{1/\lambda - 1}	\sum_{|s| \le m}|\varphi(x_s)w(x_s)|^p
 		\ll   \sum_{|s| \le m} \int_{x_{s-1}}^{x_s} |\varphi(x)w(x)|^p \rd x 
 		=\|\varphi\|_{L_{p,w}([-\rho a_m,\rho a_m])}^p,		 	
 	\end{equation}
 	which establishes the inverse inequality in \eqref{varphi><}. The norm equivalence \eqref{MarcinkiewiczInequality1} has been proven.
 	
 	We now prove the second norm equivalence in \eqref{MarcinkiewiczInequality}:
 	\begin{equation}\label{MarcinkiewiczInequality2} 
 		\|\varphi\|_{L_{p,w}(\RR)} 
 		\asymp
 		m^{(1/\lambda - 1)/p}\|(b_s(\varphi)\|_{p,w, m-\ell}
 		\ \ 	\forall \varphi \in S_{\rho,m},   \ \forall m  \ge \ell. 
 	\end{equation}
 	Due to \eqref{supp varphi}, to prove \eqref{MarcinkiewiczInequality2}  it is sufficient to show that there exists  a number 
 	$\rho:= \rho(a,\lambda,\ell,j_0)$  with $0 < \rho < 1$  such that for the first term in the right-hand side of \eqref{f-Q},
 	\begin{equation}\label{varphi><2}
 		\|\varphi\|_{L_{q,w}([-\rho a_m,\rho a_m])}
 		\asymp
 		m^{(1/\lambda - 1)/p}
 		\|(b_s(\varphi))\|_{p,w, m-\ell} \ \ \  
 		\forall \varphi \in S_{\rho,m},   \ \forall m  \ge \ell. 
 	\end{equation}
 	Let $ \varphi \in S_{\rho,m}$. 	
 	By  \eqref{Q_{rho,m}:=}  for  $x \in [x_k, x_{k+1}]$,
 	\begin{equation} \nonumber
 		\varphi(x)  \ = \ 
 		\sum_{s =k-\ell+1}^{k+\ell}\sum_{i =0}^{2\ell}  
 		c_i b_s(\varphi) h_m^{1-2\ell}(x - x_{s + i  - \ell})_+^{2\ell - 1}, 
 	\end{equation}
 	where 
 	where 
 	\begin{equation} \label{c_{i,j}:=}
 		c_i:= \frac{1}{(2\ell - 1)!}(-1)^i \binom{2\ell}{i}.
 	\end{equation}
 	We rewrite the last equality in a more compact form as
 	\begin{equation} \nonumber
 		\varphi(x)  \ = \ 
 		\sum_{(s,i) \in J_k} c_i	F_{\xi,s}\varphi(x) \ \
 		\forall x \in [x_k, x_{k+1}], 
 	\end{equation}
 	where  $J_k:=\brab{(s,i): s =k-\ell+1,...,k+\ell, \ i =0,...,2\ell}$, $\xi = s + i - \ell$ 
 	and 
 	$$
 	F_{\xi,s}:= b_s(\varphi) h_m^{1-2\ell}(x - x_\xi)_+^{2\ell - 1}
 	$$ 
 	Similarly to \eqref{(x - x_s)_+} we can choose $0 < \rho < 1$ so that 
 	with $\eta,\xi$ as in \eqref{eta,xi}
 	by \eqref{(x - x_s)_+},
 	\begin{equation}\nonumber
 		|F_{\xi,s}\varphi(x)|w(x)
 		\le 
 		|b_s(\varphi)|w(x_s) 
 		\ \
 		\forall x \in [x_k, x_{k+1}], \ \ \forall (s,i) \in J_k.
 	\end{equation}
 	By applying  the norm $\|\cdot\|_{L_{p,w}([x_k,x_{k+1}])}$\eqref{f-Q_{rho,m}f,x_kb} to the left-hand side we get
 	\begin{equation}\nonumber
 		\|F_{\xi,s}\varphi\|_{L_{q,w}([x_k,x_{k+1}])}^p 
 		\ll
 		h_m	|b_s(\varphi)w(x_s)|^p 
 		\ \
 		\forall (s,i) \in J_k.
 	\end{equation} 	
 	Hence,  in the same way as the proof  of \eqref{varphi<} we  deduce the inequality
 	\begin{equation}\nonumber
 		\|\varphi\|_{L_{q,w}([-\rho a_m,\rho a_m])}
 		\ll
 		\|(b_s(\varphi))\|_{p,w, m-\ell} \ \ 
 		\forall \varphi \in S_{\rho,m},   \ \forall m  \ge \ell. 
 	\end{equation}
 	
 	Let us prove the inverse inequality. Let $|s| \le m - \ell$. We assume $s \ge 1$.  The case $s < 1$ can be treated analogously with a modification.  From \cite[Lemma 4.1, Chapter 4]{DeLo93B} it follows that
 	\begin{equation} \nonumber
 		|b_s(\varphi)|^p
 		\le 
 		\|\varphi\|_{L_\infty([x_{s-1},x_s])}^p 
 		\ll h_m^{-1} \|\varphi\|_{L_p([x_{s-1},x_s])}^p = h_m^{-1} \int_{x_{s-1}}^{x_s} |\varphi(x)|^p \rd x. 		 	
 	\end{equation}
 	Hence,
 	\begin{equation}\nonumber
 		|b_s(\varphi)w(x_s)|^p
 		\ll   h_m^{-1}\int_{x_{s-1}}^{x_s} |\varphi(x)w(x)|^p \rd x =\|\varphi\|_{L_{p,w}([x_{s-1},x_s])}^p,		 	
 	\end{equation}
 	and, consequently,		 
 	\begin{equation}\nonumber
 		m^{1/\lambda - 1}	\sum_{|s| \le m - \ell}|b_s(\varphi)w(x_s)|^p
 		\ll   \sum_{|s| \le m - \ell} \int_{x_{s-1}}^{x_s} |\varphi(x)w(x)|^p \rd x 
 		\le \|\varphi\|_{L_{p,w}([-\rho a_m,\rho a_m])}^p,		 	
 	\end{equation}
 	which establishes the inverse inequality in \eqref{varphi><2}. The norm equivalence \eqref{MarcinkiewiczInequality2} has been proven. The proof of the theorem is complete.
 	\hfill	
 \end{proof}

 \subsection{Proof of Theorem \ref{thm:BernsteinInequality}} 
 \label{Proof of Theorem ref{thm:BernsteinInequality}}
 
 \begin{proof}
 Fix a positive number $\rho$ satisfying \eqref{rho<max}.
 	Let 	
 	$ \varphi \in S_{\rho,m}$. 
 	Due to \eqref{supp varphi}, to prove \eqref{varphi^{(r)}} it is sufficient to show that there exists $0 < \rho < 1$ such that
 	\begin{equation}\label{varphi^{(r)}<}
 		\|\varphi^{(r)}\|_{L_{q,w}([-\rho a_m,\rho a_m])}
 		\ll
 		m^{ r_\lambda} \|f \|_{L_{p,w}(\RR)}.  
 	\end{equation}
 	We have
 	\begin{equation*}
 		\|\varphi^{(r)}\|_{L_{p,w}([-\rho a_m,\rho a_m])}^p
 		=
 		\sum_{k=-m + \ell}^{m - \ell - 1} \|\varphi^{(r)}\|_{L_{p,w}([x_k,x_{k+1}])}^p.
 	\end{equation*}
 	By  \eqref{Q_{rho,m}:=}  for  $x \in [x_k, x_{k+1}]$,
 	\begin{equation} \nonumber
 		\varphi^{(r)}(x)  \ = \ 
 		\sum_{s =k-\ell+1}^{k+\ell} \sum_{|j| \le j_0}  
 		\sum_{i =0}^{2\ell} c_{i,j} h_m^{1-2\ell}\varphi(x_{s - j}) (x - x_{s + i  - \ell})_+^{2\ell - 1 - r}, 
 	\end{equation}
 	where $c_{i,j}$ is as in \eqref{c_{i,j}:=}.
 	
 	We rewrite the last equality in a more compact form as
 	\begin{equation} \label{varphi^{(r)}(x)=}
 		\varphi^{(r)}(x)  \ = \ 
 		\sum_{(s,i,j) \in J_k^Q} c_{i,j} 	F_{\xi,\eta}\varphi(x) \ \
 		\forall x \in [x_k, x_{k+1}], 
 	\end{equation}
 	where  $J_k^Q$ is as in \eqref{J_k^Q}, $\xi,\eta$ as in \eqref{eta,xi}
 	and 
 	\begin{equation}\nonumber
 		F_{\xi,\eta}\varphi(x):=	\varphi(x_\eta) h_m^{1-2\ell+r}(x - x_\xi)_+^{2\ell - 1-r}.
 	\end{equation}	
 	With the chosen number 
 	$\rho$ satisfying  \eqref{rho<max},
 	and $\eta,\xi$ as in \eqref{eta,xi}, we get by \eqref{(x - x_s)_+},
 	\begin{equation}\nonumber
 		h_m^{1 - 2\ell + r} (x - x_\xi)_+^{2\ell - 1 - r}w(x)
 		\ll 
 		w(x_\eta)  \ \
 		\forall x \in [x_k, x_{k+1}], \ \ (s,i,j) \in J_k^Q.
 	\end{equation}
 	Hence, with $\eta,\xi$ as in \eqref{eta,xi}
 	we have
 	\begin{equation}\nonumber
 		|F_{\xi,\eta}\varphi(x)|w(x)
 		\le 
 		h_m^{-r}	|\varphi(x_\eta)|w(x_\eta) 
 		\ \
 		\forall x \in [x_k, x_{k+1}], \ \ \forall (s,i,j) \in J_k^Q.
 	\end{equation}
 	By \eqref{|varphi(x_s)w(x_s)|^p}
 	\begin{equation}\label{|varphi(x_s)w(x_s)|^p}
 		|\varphi(x_\eta)w(x_\eta)|^p
 		\ll   h_m^{-1}\int_{x_{\eta-1}}^{x_\eta} |\varphi(x)w(x)|^p \rd x =\|\varphi\|_{L_{p,w}([x_{\eta-1},x_\eta])}^p.		 	
 	\end{equation}
 	By applying  the norm $\|\cdot\|_{L_{p,w}([x_k,x_{k+1}])}$\eqref{f-Q_{rho,m}f,x_kb} to both the  sides we get 
 	\begin{equation}\nonumber
 		\|F_{\xi,\eta}\varphi\|_{L_{p,w}([x_k,x_{k+1}])}^p 
 		\le 
 		m^{pr_\lambda} \big\|\varphi\big\|_{L_{p,w}([x_{\eta - 1}, x_\eta])}^p\ \
 		\forall (s,i,j) \in J_k^Q,
 	\end{equation}
 	which together with \eqref{varphi^{(r)}(x)=} implies 
 	\begin{equation}\nonumber
 		\|\varphi^{(r)}\|_{L_{p,w}([x_k,x_{k+1}])}^p 
 		\ll
 		m^{pr_\lambda}	\sum_{(s,i,j) \in J_k^Q}  \big\|\varphi\big\|_{L_{w,p}([x_{\eta-1}, x_\eta])}^p.
 	\end{equation}
 	Hence, similarly to \eqref{|f - Q_{rho,m}f|} and \eqref{f-Q_{rho,m}f,a_m(a)} we derive
 	\begin{equation}\nonumber
 		\|\varphi^{(r)}\|_{L_{p,w}([-\rho a_m,\rho a_m])}^p 
 		\ll
 		m^{pr_\lambda}	\sum_{k=-m  - j_0}^{m + j_0 - 1}\sum_{(s,i,j) \in J_k^Q}  \big\|\varphi\big\|_{L_{w,p}([x_{\eta-1}, x_\eta])}^p
 		\ll m^{pr_\lambda}	\|\varphi\|_{L_{p,w}(\RR)}^p,
 	\end{equation}
 	which proves \eqref{varphi^{(r)}<}.
 	\hfill	
 \end{proof}

\bibliographystyle{abbrv}
\bibliography{WeightedBsplineApprox}
\end{document}